\newcommand{\Ran}{{\operatorname{Ran}}}
\newcommand{\cs}{{\rm the Cauchy-Schwarz inequality }}
\newcommand{\cS}{{\rm the Cauchy-Schwarz inequality}}
\newcommand{\ad}{{\operatorname{ad}}}
\newcommand{\Opw}{{\operatorname{Op\!^w\!}}}
\newcommand{\N}{{\mathbb{N}}} 
\newcommand{\R}{{\mathbb{R}}} 
\newcommand{\C}{{\mathbb{C}}}
 \renewcommand{\c}{{\rm c}}
\newcommand{\e}{{\rm e}} 
\newcommand{\adop}{{\rm ad}}
\newcommand{\lop}{{\rm l}}
\newcommand{\rop}{{\rm r}}
  \renewcommand{\i}{{\rm i}}
\renewcommand{\d}{{\rm d}} 
 \newcommand{\righ}{{\rm right}}
\newcommand{\lef}{{\rm left}} \newcommand{\unif}{{\rm unif}}
\renewcommand{\Re}{{\rm Re}\,} \renewcommand{\Im}{{\rm Im}\,}
\DeclarePairedDelimiter\inp\langle\rangle
\newcommand\parb[2][]{#1 \big ( #2#1\big )} \newcommand\parbb[2][]{#1
  \Big ( #2#1\Big )} 
 \newcommand{\grad}{{\rm grad }\,}
 \renewcommand{\exp}{{\rm exp}}
\newcommand{\mand}{\text{ and }} 
\newcommand{\mforall}{\text{ for all }}
\theoremstyle{plain}
\newtheorem{thm}{Theorem}[section]
\newtheorem{lemma}[thm]{Lemma} 
\theoremstyle{definition}
\newtheorem{remarks}[thm]{Remarks}
\newtheorem*{remarks*}{Remarks}
\newtheorem*{remark*}{Remark}
\newtheorem*{defn*}{Definition}
\numberwithin{equation}{section}
\title{Decay of eigenfunctions of elliptic PDE's}
\author{I. Herbst}
\address[I.  Herbst]{Department of Mathematics \\
  University of Virginia \\
  Charlottesville \\
  VA 22904\\ U.S.A.}
\email{iwh@virginia.edu}
  \author{E. Skibsted} \address[E. Skibsted]{Institut for Matematiske
  Fag \\
  Aarhus Universitet\\ Ny Munkegade 8000 Aarhus C, Denmark}
\email{skibsted@imf.au.dk}
\begin{document}

  \begin{abstract} We study exponential  decay of eigenfunctions of self-adjoint 
    higher order elliptic operators on  $\R^d$.  We show that the possible critical
    decay rates are determined algebraically.   In addition we show absence of
    super-exponentially decaying eigenfunctions and a refined
    exponential 
    upper bound.

\end{abstract}

\keywords {eigenfunctions, exponential decay, microlocal analysis,
  combinatorics.}

\maketitle

\tableofcontents

\section{Introduction and results}\label{subsec:result}
Consider  a real elliptic polynomial $Q$ of degree $q$ on  $\R^d$.
We consider  the operator
$H=Q(p)+V(x)$, $p=-\i \nabla$, on $L^2=L^2(\R^d)$ with  $V$  bounded
and  measurable  
and with $\lim_{|x| \to \infty} V(x) = 0$. 

We will mostly  assume there is a splitting of $V,$  $V=V_1+V_2$, into real-valued bounded functions, $V_1$
smooth and $V_2$  measurable, with additional assumptions depending on the result.

 For a given  
$\lambda\in \R$  the energy surface
  \begin{align*}
  S_{\lambda}=\{(x,\xi)\in \R^d\times \R^d|Q(\xi)=\lambda\}
\end{align*} is by definition regular if $\lambda$ is not a critical value of $Q$, that is if

\begin{align}\label{eq:14}
  \nabla Q(\xi)\neq 0 \text{ on } S_{\lambda}.
\end{align}
We will need this condition in one of our results.
 
Suppose
 $(H-\lambda)\phi=0$,
  $\phi\in L^2$.  The {\it critical  decay rate} is defined as 
  \begin{align*}
    \sigma_\c=\sup\{\sigma\geq 0|\e^{\sigma|x|}\phi\in L^2\}.
  \end{align*} 

In this paper we shall study this notion of decay rate for  eigenfunctions, cf. previous works for the Laplacian \cite{CT,FHH2O1,
FHH2O3, FHH2O2} corresponding to the case $Q(\xi)=\xi^2$. In particular we
give necessary (phase-space) conditions for a positive number  $\sigma$ to be the critical  decay
rate (generalizing a result for the Laplacian), and we shall give a
refined exponential upper bound (actually a new result for the Laplacian). This 
set of conditions defines  a notion of {\it exceptional points}
 playing the same role as a certain set for the
$N$-body problem related to the set of thresholds \cite{FH, IS1}.  
However these conditions are very
different from what could  be expected from this analogous problem. More precisely  the critical decay rate is not computable in terms  of the critical values of
$Q$ which are the exceptional energies for the Mourre estimate \cite{Mo} if we use the ``natural" conjugate operator $A = x\cdot \nabla Q(p) + \nabla Q(p)\cdot x $. On the other hand, as we shall see,  $\sigma_\c>0$ at
non-critical values. A similar result, although in 
different settings, appears in \cite{MP1, MP2}.  

To be a little more precise
about the analogy of our problem with the N-body problem let us add a
sufficiently decaying  $N$-particle interaction  potential to the N-body Hamiltonian (which produces a potential decaying in the whole space after the center of mass motion is removed).  Then the possible decay rates are independent of this potential just as they are independent of our decaying $V$ for the operator $H=Q(p) + V$.

To define the exceptional points we need the following notation: For $\omega \in \R^d$ we let 
  $P_{\perp}(\omega)=I-|\omega\rangle\langle \omega|$ (defined in terms
  of inner product brackets). 
\begin{defn*} Let $\lambda\in \R$ be given. The set of  exceptional points $\Sigma_{\rm
  exc}(\lambda)$ is the set of $ \sigma\in (0,\infty)$ for which 
  there exists
  $(\omega,\xi)\in S^{d-1}\times \R^d$ satisfying the equations
  \begin{subequations}
  \begin{align}\label{eq:10}
    Q(\xi+\i \sigma \omega)&=\lambda,\\
P_{\perp}(\omega)\nabla_\xi Q(\xi +\i \sigma \omega)&= 0.\label{eq:11} 
  \end{align}  
  \end{subequations}   
\end{defn*}

Another major subject of this paper is absence of
super-exponentially decaying  eigenfunctions (corresponding to the case
$\sigma_\c=\infty$), cf. \cite{FHH2O1, VW, IS2}. We show  absence of such
states  under somewhat strong decay conditions on the
potential.
\subsection{Results}\label{subsec:Results}
Suppose
  $(H-\lambda)\phi=0$,
  $\phi\in L^2$, with corresponding critical decay rate
  $\sigma_\c$. Let $\Ran Q = \{Q(\xi)| \xi \in \R^d\}$.  Our main results read:

\begin{thm}\label{thm:start}
Under either of the following two conditions we can conclude that $\sigma_\c >0$:
\begin{enumerate}[1)]
\item\label{item:1}
$\lambda \notin {\Ran }Q$  and $V(x) = o(1)$.
\item\label{item:2}
$\lambda  \in {\Ran }Q$ but $\lambda$ is not a critical value of $Q$ and in addition 
\begin{align*}
&\forall \alpha: \partial^{\alpha}V_1(x) = o(|x|^{- |\alpha|}),\\
&V_2(x) = o(|x|^{-1}).
\end{align*}
\end{enumerate}
\end{thm}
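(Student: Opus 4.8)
The plan is to reduce both parts to a single uniform a priori bound. Fix a small $\sigma>0$ and a regularization $F_n\in C^\infty(\R^d)$, bounded for each $n$, with $F_n\uparrow\sigma\langle x\rangle$ pointwise and $|\partial^\alpha F_n|\le C_\alpha\sigma$ uniformly in $n$ for every $\alpha$ (e.g.\ $F_n=\sigma\langle x\rangle\,(1+\langle x\rangle/n)^{-1}$). Since $F_n$ is bounded with bounded derivatives, $\e^{F_n}$ preserves $D(H)=H^q(\R^d)$, so $\phi_n:=\e^{F_n}\phi\in L^2$ satisfies $(H_{F_n}-\lambda)\phi_n=0$, where
\begin{align*}
  H_{F_n}=\e^{F_n}H\e^{-F_n}=Q(p+\i\nabla F_n)+V ;
\end{align*}
this operator is unambiguous because the $p_j+\i\partial_jF_n$ pairwise commute. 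If for some fixed small $\sigma$ one shows $\sup_n\|\phi_n\|<\infty$, then monotone convergence gives $\e^{\sigma\langle x\rangle}\phi\in L^2$, hence $\sigma_\c\ge\sigma>0$.

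For part \ref{item:1} I would invoke a Combes--Thomas estimate. Here $\lambda\notin\Ran Q$ forces $Q$ bounded below and $\lambda<\min Q=\min\Spec Q(p)$. Since $Q$ has real coefficients, the symmetric part of $Q(p+\i\nabla F_n)$ is $\tfrac12\bigl(Q(p+\i\nabla F_n)+Q(p-\i\nabla F_n)\bigr)=Q(p)+S_n$, with $S_n$ a self-adjoint differential operator of order $\le q-2$ and coefficients $O(\sigma^2)$; hence $S_n$ is form-small relative to $Q(p)$, uniformly in $n$, and
\begin{align*}
  \Re\langle\psi,Q(p+\i\nabla F_n)\psi\rangle\ge\langle\psi,Q(p)\psi\rangle-C\sigma^2\langle\psi,(Q(p)-\min Q+1)\psi\rangle\ge(\min Q-C'\sigma^2)\|\psi\|^2 .
\end{align*}
Taking the real part of $\langle\phi_n,(H_{F_n}-\lambda)\phi_n\rangle=0$ gives $\langle\phi_n,(\lambda-\min Q+C'\sigma^2-V)\phi_n\rangle\ge0$; since $\lambda-\min Q<0$ and $V=o(1)$, the multiplier is $\le-\mu<0$ on $\{|x|>R\}$ for $R$ large and $\sigma$ small, and bounded by $C_R$ on $\{|x|\le R\}$, which yields $\|\mathbf{1}_{\{|x|>R\}}\phi_n\|^2\le(C_R/\mu)\e^{2\sigma\langle R\rangle}\|\phi\|^2$ and hence $\sup_n\|\phi_n\|<\infty$.

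For part \ref{item:2} the crude real-part estimate fails, since now $\lambda\in\Ran Q$ and the symbol $Q(\xi+\i\sigma\omega)-\lambda$ has real zeros clustering near $S_\lambda$; this is where the microlocal analysis of the paper must enter. I would split $\phi_n$ by a phase-space partition into a piece microlocalized where $|Q(\xi)-\lambda|>\delta$ — there $Q(p)-\lambda$ is elliptic and the mechanism of part \ref{item:1} controls that piece — and a piece microlocalized near $S_\lambda$. The sharp hypotheses $\partial^\alpha V_1(x)=o(|x|^{-|\alpha|})$ and $V_2(x)=o(|x|^{-1})$ are exactly what make the pseudodifferential remainders produced by commuting the cutoffs past $V$ and past $\e^{F_n}$ of genuinely lower order. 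For the near-$S_\lambda$ piece one uses that $\lambda$ is not a critical value, so $\nabla Q\neq0$ on $S_\lambda$ and $H$ obeys a strict Mourre estimate at $\lambda$ with the conjugate operator $A=x\cdot\nabla Q(p)+\nabla Q(p)\cdot x$ (the decay conditions on $V$ making $\i[V,A]$ and $\i[\i[V,A],A]$ compact, resp.\ bounded); this estimate is stable under the boost $p\mapsto p+\i\sigma\omega$ carried by $\e^{F_n}$ as long as $\sigma$ is below $\inf\Sigma_{\rm exc}(\lambda)$, and $\inf\Sigma_{\rm exc}(\lambda)>0$ at a regular value — which one sees by linearizing \eqref{eq:10}--\eqref{eq:11} along the $\sigma=0$ solution locus $\{\xi\in S_\lambda,\ \omega=\pm\nabla Q(\xi)/|\nabla Q(\xi)|\}$ and using $\nabla Q\neq0$ on $S_\lambda$ to rule out any branch with $\sigma\to0^+$. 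Running the boosted commutator estimate with such $\sigma$ again gives $\sup_n\|\phi_n\|<\infty$.

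The main obstacle is part \ref{item:2}: arranging the phase-space decomposition so that it survives conjugation by the exponential weight with all errors genuinely of lower order — precisely why the sharp decay conditions on $V$ and all its derivatives are needed, and why a bare Combes--Thomas argument does not suffice — together with quantifying the robustness of the near-$S_\lambda$ commutator estimate under small boosts, i.e.\ that the first exceptional rate is strictly positive at a non-critical value. Part \ref{item:1} is routine.
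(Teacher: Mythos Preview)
Your part~\ref{item:1} is correct and matches the paper, which also dispatches this case by Combes--Thomas. For part~\ref{item:2} your underlying idea is the same as the paper's: the paper's conjugate operator has symbol $a_m=(r/\gamma g_m)Y_m$ with $Y_m=\Im Q(\xi+\i\gamma g_m\omega)$, and to leading order in $\gamma$ this is $r\,\omega\cdot\nabla Q(\xi)\approx x\cdot\nabla Q(\xi)$, i.e.\ precisely your Mourre symbol. The crucial Poisson bracket comes out as $\{X_m,a_m\}=|\nabla Q(\xi)|^2+O(\gamma)+O(r^{-1})$, positive near $S_\lambda$ by non-criticality once $\gamma$ is small. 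But the paper does \emph{not} package this as an abstract Mourre estimate: it quantizes the symbol inequality via Fefferman--Phong and takes the expectation directly in the weighted eigenstate $\phi_m$, then bounds the potential terms $\inp{\tfrac{\i}{2}[A_m,V_1]}_{\phi_m}$ and $\Im\inp{r^{-1}A_m\phi_m,\,rV_2\phi_m}$ by Cauchy--Schwarz. This is exactly where the hypotheses $\partial^\alpha V_1=o(|x|^{-|\alpha|})$ and $V_2=o(|x|^{-1})$ enter --- as quantitative smallness of these expectations relative to the main positive term $c\|\phi_m\|^2$, not as compactness of $\i[V,A]$ or boundedness of the double commutator.

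That is the gap in your proposal: the abstract Mourre conditions you invoke (first commutator compact, second bounded) live on the spectral/resolvent side of the theory and do not by themselves deliver $\sup_n\|\phi_n\|<\infty$; what is needed is the Froese--Herbst device of evaluating the commutator on $\phi_n$ itself. Your phrase ``running the boosted commutator estimate'' may intend this, but the surrounding language points elsewhere. Separately, your detour through $\Sigma_{\rm exc}(\lambda)$ is unnecessary here: Theorem~\ref{thm:start} only asks for arbitrarily small $\sigma>0$, and stability of the commutator positivity under an $O(\gamma)$ boost is a pure perturbation argument. (That $\inf\Sigma_{\rm exc}(\lambda)>0$ at regular $\lambda$ is true --- your linearization, combined with $\Im Q(\xi+\i\sigma\omega)=\sigma\,\omega\cdot\nabla Q(\xi)+O(\sigma^3)$ forcing $\omega\cdot\nabla Q(\xi)\to 0$ while \eqref{eq:11} forces $\nabla Q(\xi)\parallel\omega$, does work --- but that fact belongs to Theorem~\ref{thm:12.7.2.7.9}~\ref{item:1cd}, not here.)
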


\begin{thm}\label{thm:12.7.2.7.9}  Suppose  $0<\sigma_\c< \infty$.
\begin{enumerate}[i)]
\item \label{item:1c}  
  If $V(x) =o(1)$ there exists $(\omega,\xi)\in S^{d-1}\times \R^d$ with
  \begin{align}\label{eq:12}
    Q(\xi+\i \sigma_\c \omega)=\lambda.
  \end{align}
\item \label{item:1cd}  If
\begin{align*}
&\forall \alpha: \partial^{\alpha}V_1(x) = o(|x|^{- |\alpha|}),\\
&V_2(x) = o(|x|^{-1/2}),
\end{align*}
then $\sigma_\c\in \Sigma_{\rm
  exc}(\lambda)$.
\end{enumerate}
\end{thm}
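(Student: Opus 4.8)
The plan is to study the exponentially weighted function $\phi_\sigma=\e^{\sigma F}\phi$, where $F\in C^\infty(\R^d)$ regularizes $|x|$ so that $F(x)=|x|$ and $|\nabla F(x)|=1$ for $|x|\geq1$ and $\partial^\alpha F(x)=O(|x|^{1-|\alpha|})$; then $\sigma_\c=\sup\{\sigma\geq0\mid\phi_\sigma\in L^2\}$, the conjugated operator $H_\sigma=\e^{\sigma F}H\e^{-\sigma F}=Q(p+\i\sigma\nabla F)+V$ annihilates $\phi_\sigma$ for $\sigma<\sigma_\c$, and for $|x|$ large its symbol $Q\parb{\xi+\i\sigma\nabla F(x)}-\lambda+V(x)$ --- with $V(x)\to0$ and $\nabla F(x)=x/|x|=:\omega$ a unit vector --- is a small perturbation of the limiting symbol $p_\sigma(\omega,\xi):=Q(\xi+\i\sigma\omega)-\lambda$ on $S^{d-1}\times\R^d$. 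Since $Q$ is real elliptic of (necessarily even) degree $q$, say with $Q_q>0$, one has $\Re p_\sigma(\omega,\xi)\to+\infty$ as $|\xi|\to\infty$ uniformly for $(\omega,\sigma)$ in compacta, so the zero set of $p_{\sigma_\c}$ is compact and all the phase-space analysis below stays inside a fixed large ball in $\xi$.

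For part \ref{item:1c} I would argue by contradiction. If $Q(\xi+\i\sigma_\c\omega)\neq\lambda$ for all $(\omega,\xi)\in S^{d-1}\times\R^d$, compactness yields $\varepsilon,\delta>0$ with $|p_\sigma(\omega,\xi)|\geq\delta$ whenever $\omega\in S^{d-1}$, $\xi\in\R^d$ and $|\sigma-\sigma_\c|\leq\varepsilon$. A Combes--Thomas/exterior-parametrix construction then makes the restriction of $H_{\sigma_\c+\varepsilon'}-\lambda$ to $\{|x|\geq R_0\}$ invertible with norm-controlled inverse for small $\varepsilon'\in(0,\varepsilon)$, and the standard exponential-bound bootstrap --- run with truncated weights $F_R:=\min(F,R)$ to stay in $L^2$, producing an $R$-uniform bound on $\lVert\e^{(\sigma_\c+\varepsilon')F_R}\phi\rVert$ and letting $R\to\infty$ --- gives $\e^{(\sigma_\c+\varepsilon')|x|}\phi\in L^2$, contradicting the definition of $\sigma_\c$. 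Only $V=o(1)$ enters, exactly so that the exterior operator is a small perturbation of one with the invertible symbol $p_{\sigma_\c}$.

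For part \ref{item:1cd} the extra equation \eqref{eq:11} has to be squeezed out of the criticality of $\sigma_\c$. Normalize $\psi_\sigma:=\phi_\sigma/\lVert\phi_\sigma\rVert$; since $\sigma_\c<\infty$ is a supremum, $\lVert\phi_\sigma\rVert\to\infty$ and the mass of $\psi_\sigma$ escapes to spatial infinity as $\sigma\uparrow\sigma_\c$, while ellipticity and the equation keep $\psi_\sigma$ bounded in the $L^2$-Sobolev norms up to order $q$. Passing along a subsequence to a nonzero phase-space defect measure $\mu$ on $S^{d-1}_\omega\times\{|\xi|\leq R\}$ and testing $(H_\sigma-\lambda)\phi_\sigma=0$ against $\psi_\sigma$ and against $B\psi_\sigma$ --- $B$ zeroth-order pseudodifferential, localized near infinity --- re-proves $\supp\mu\subseteq\{p_{\sigma_\c}(\omega,\xi)=0\}$. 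For \eqref{eq:11} I would run a positive-commutator argument adapted to radial propagation: with a conjugate operator $A$ built from the radial component $(x/|x|)\cdot\nabla_\xi Q(p)$ of the group velocity, symmetrized, the identity $\partial_{x_k}(x_j/|x|)=(P_\perp(x/|x|))_{jk}/|x|$ gives $\i[Q(p),A]=|x|^{-1}\lvert P_\perp(x/|x|)\nabla_\xi Q(p)\rvert^2$ to leading order, so that $\i[H_\sigma,A]$ has, at infinity and on the characteristic set, a non-negative principal symbol proportional to $|x|^{-1}\lvert P_\perp(\omega)\nabla_\xi Q(\xi+\i\sigma\omega)\rvert^2$. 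Pairing $(H_\sigma-\lambda)\phi_\sigma=0$ with $A\phi_\sigma$ and with $\phi_\sigma$, and using $H_\sigma^*=H_{-\sigma}$ to rewrite the non-self-adjointness as controllable $\sigma$-odd terms, forces $\int\lvert P_\perp(\omega)\nabla_\xi Q(\xi+\i\sigma_\c\omega)\rvert^2\,\d\mu'=0$ for the measure $\mu'$ obtained from $\mu$ by the $|x|^{-1}$ Jacobian; hence $\mu$ is carried by the set where \eqref{eq:10} and \eqref{eq:11} both hold, and since $\mu\neq0$ such $(\omega,\xi)$ exists, i.e.\ $\sigma_\c\in\Sigma_{\rm exc}(\lambda)$. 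The hypotheses $\partial^\alpha V_1=o(|x|^{-|\alpha|})$ and $V_2=o(|x|^{-1/2})$ are exactly what absorbs $\i[V,A]$ into the negligible remainder, the half-power for $V_2$ reflecting that the positive term carries a factor $|x|^{-1}$ that must be split as $|x|^{-1/2}$ on each side in a Cauchy--Schwarz estimate.

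The crux is this last step: organizing $\i[H_\sigma,A]$ for the \emph{non-self-adjoint} weighted operator so that the genuinely non-negative term $\lvert P_\perp(\omega)\nabla_\xi Q(\xi+\i\sigma\omega)\rvert^2$ is isolated as the leading contribution, while the subprincipal terms from the derivatives of $F$ (size $O(|x|^{-1})$), the boundary terms, and $\i[V,A]$ all vanish relative to $\lVert\phi_\sigma\rVert^2$ as $\sigma\uparrow\sigma_\c$. Equivalently one must set up and exploit the phase-space defect measure at \emph{spatial} infinity --- where the natural base is the sphere at infinity with defining function $|x|^{-1}$ --- and match the $|x|^{-1}$ weight in the commutator to the escape rate of the mass of $\psi_\sigma$, rather than working in the familiar semiclassical or bounded-domain settings. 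This quasi-homogeneous, unbounded-domain incarnation of the usual microlocal positivity argument is the technical heart of the proof, and also the point at which the strengthened potential-decay hypotheses become unavoidable.
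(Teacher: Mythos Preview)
Your treatment of part \ref{item:1c} is fine and essentially matches the paper: both proceed by contradiction, exploit that $|Q(\xi+\i\sigma\omega)-\lambda|$ is bounded below uniformly for $\sigma$ near $\sigma_\c$, and push past $\sigma_\c$ with a regularized weight (the paper uses $f_m(r)=r\bigl(\sigma+\gamma/(1+r/m)\bigr)$ rather than your truncation $\min(F,R)$, but the mechanism is the same).

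For part \ref{item:1cd} there are two genuine gaps.

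\textbf{First, the blow-up claim.} You assert that ``since $\sigma_\c<\infty$ is a supremum, $\lVert\phi_\sigma\rVert\to\infty$.'' This is not automatic: nothing prevents $\e^{\sigma_\c|x|}\phi\in L^2$ while $\e^{(\sigma_\c+\varepsilon)|x|}\phi\notin L^2$ for every $\varepsilon>0$ (take $\phi\sim\e^{-\sigma_\c|x|}g$ with $g\in L^2$ of merely polynomial decay). In that case $\lVert\phi_\sigma\rVert$ stays bounded as $\sigma\uparrow\sigma_\c$, your normalized sequence $\psi_\sigma$ need not escape to infinity, and the defect measure may well be zero without any contradiction. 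The paper handles exactly this by a \emph{two-step} argument: first the commutator estimate gives a \emph{uniform} bound $\lVert\phi_\sigma\rVert\leq C\lVert\phi\rVert$ for $\sigma<\sigma_\c$, hence $\e^{\sigma_\c|x|}\phi\in L^2$; then the same estimate is re-run with the regularized weights $f_m$ (with $\sigma+\gamma>\sigma_\c$) to push strictly past $\sigma_\c$, the non-convexity of $f_m$ being absorbed as a controlled $O(\gamma r^{-1})$ error. Your defect-measure framework omits this second step entirely.

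\textbf{Second, the conjugate operator.} You take $A$ with symbol $\hat x\cdot\nabla_\xi Q(\xi)$ at \emph{real} $\xi$ and then claim that $\i[H_\sigma,A]$ has principal symbol $|x|^{-1}\lvert P_\perp(\omega)\nabla_\xi Q(\xi+\i\sigma\omega)\rvert^2$. This substitution $\xi\to\xi+\i\sigma\omega$ is not what the commutator produces: conjugating $\i[Q(p),A]$ by $\e^{\sigma F}$ shifts the argument in the \emph{polynomial} $|P_\perp\nabla Q(\xi)|^2=(P_\perp\nabla Q(\xi))\cdot(P_\perp\nabla Q(\xi))$, yielding the complex bilinear expression $(P_\perp\nabla Q(\zeta))\cdot(P_\perp\nabla Q(\zeta))$, \emph{not} the complex modulus $\lvert P_\perp\nabla Q(\zeta)\rvert^2$; the former is neither real nor sign-definite. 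The paper's key device is instead to choose $A=\Opw(rY)$ with $Y=\Im Q(\xi+\i\sigma\omega)$, so that the Cauchy--Riemann equations give
\[
\sigma^{-1}\{X,Y\}=\partial_\xi X\,\omega'\,\partial_\xi X^{T}+\partial_\xi Y\,\omega'\,\partial_\xi Y^{T}
=r^{-1}\bigl(\lvert P_\perp\Re\nabla Q(\zeta)\rvert^2+\lvert P_\perp\Im\nabla Q(\zeta)\rvert^2\bigr)
=r^{-1}\lvert P_\perp\nabla Q(\zeta)\rvert^2,
\]
delivering exactly the non-negative term you want, and the extra factor $r$ in the symbol of $A$ cancels the $r^{-1}$ from $\omega'$. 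The accompanying term $2rY^2\geq0$ absorbs the $V_2$ contribution after Cauchy--Schwarz, which is where the $o(|x|^{-1/2})$ hypothesis enters; your identification of that mechanism is correct, but it hinges on having the $rY^2$ term available, which your choice of $A$ does not provide.
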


Theorem \ref{thm:12.7.2.7.9} \ref{item:1cd} gives stringent necessary
conditions on a decay rate, namely that it belong to
$\Sigma_{\rm exc}(\lambda)$.  To see that in certain situations \emph{all}
of the elements of $\Sigma_{\rm exc}(\lambda)$ can occur as decay rates, see the discussion in Subsection \ref{subsec:example} where we consider the family of $Q$'s which are polynomials in $\xi^2$.

In a generic sense (see the remark following the theorem), the next result gives a more precise estimate on the decay of $\phi$ once we know $\sigma_\c \in (0,\infty)$.

\begin{thm}\label{precise}
Suppose $0<\sigma_\c< \infty$.
Suppose  $\forall \alpha: \partial^{\alpha}V_1(x) = O(|x|^{-|\alpha| -
  \delta_1})$ and $V_2(x) = O(|x|^{-1/2 - \delta_2})$ with $\delta_1,\delta_2 >0$. Then
   either there exists $(\omega,\xi)\in S^{d-1}\times \R^d$
   satisfying 
 \begin{align}\label{eq:16eqn}
Q(\xi+\i \sigma_\c \omega)=\lambda\mand \nabla_\xi Q(\xi +\i \sigma_\c \omega)= 0, 
 \end{align}
  or  for any  $\epsilon\in (0,\epsilon')$ where  $\epsilon'=\min (\delta_1,
 2\delta_2, 1)$,
\begin{align*} 
 \e^{\sigma_{\c}(|x|-|
x|^{1-\epsilon})}\phi\in L^2.
\end{align*} 
\end{thm}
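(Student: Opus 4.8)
The plan is to reduce the claim to a Carleman-type estimate for the (smoothed) weight $F$ with $F(x)=\sigma_\c(|x|-|x|^{1-\epsilon})$ for large $|x|$. Since $F\le\sigma_\c|x|$, one has $\e^{tF}\phi\in L^2$ for every $t\in[0,1)$ by the definition of $\sigma_\c$, so it suffices to bound $\|\e^{tF}\phi\|$ uniformly for $t\uparrow1$ and conclude by Fatou; the essential point is that $|\nabla(tF)|\le\sigma_\c-c\,|x|^{-\epsilon}$ for $|x|$ large, uniformly in $t\in[0,1]$, which keeps a positive margin even at $t=1$. If the first alternative \eqref{eq:16eqn} holds there is nothing to prove, so assume it fails. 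By ellipticity of $Q$ the set $\mathcal B=\{(\omega,\xi)\in S^{d-1}\times\R^d:\ Q(\xi+\i\sigma_\c\omega)=\lambda\}$ is compact; it is nonempty and meets the exceptional locus $\{P_\perp(\omega)\nabla_\xi Q(\xi+\i\sigma_\c\omega)=0\}$ by Theorem \ref{thm:12.7.2.7.9}\,\ref{item:1cd} (whose hypotheses are implied by those assumed here); and, crucially, the failure of \eqref{eq:16eqn} together with compactness gives $|\nabla_\xi Q(\xi+\i\sigma_\c\omega)|\geq c_0>0$ on all of $\mathcal B$ — so at the exceptional points of $\mathcal B$ the gradient $\nabla_\xi Q$ is a nonzero (complex) multiple of $\omega$.

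Next I would conjugate. With $u=\chi_R\e^{tF}\phi$, $\chi_R$ a cutoff to $\{|x|<R\}$, the equation becomes $(H_{tF}-\lambda)u=g_R$, $H_{tF}=Q(p+\i t\nabla F)+V_1+V_2$, with $g_R$ supported on $\{|x|<R_0\}\cup\{|x|\sim R\}$ — the inner part harmless, the outer part controlled uniformly in $R$ via $\e^{tF}\phi\in L^2$. Testing against a multiplier $B$, $0=\Re\langle(H_{tF}-\lambda)u,Bu\rangle-\Re\langle g_R,Bu\rangle$, and using pseudodifferential calculus, the governing symbol is $Q(\xi+\i t\nabla F(x))-\lambda$. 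By ellipticity of $Q$ this is bounded away from $0$ for $|x|>R_0$ outside a small conic neighbourhood of $\mathcal B$; a G\aa rding inequality there produces a large positive contribution, and the analysis reduces to that neighbourhood.

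The heart is the gain near $\mathcal B$. There $\xi+\i t\nabla F(x)=(\xi+\i\sigma_\c\hat x)-\i\big((1-t)\sigma_\c+t\sigma_\c(1-\epsilon)|x|^{-\epsilon}\big)\hat x$, so the ``deficit'' of $\i t\nabla F$ from $\i\sigma_\c\hat x$ points in the $\i\hat x$ direction and has size $\rho\geq c\,|x|^{-\epsilon}$ uniformly in $t\in[0,1]$. Since at the exceptional points $\nabla_\xi Q(\xi+\i\sigma_\c\hat x)$ is a nonzero multiple of $\hat x$, a first-order expansion of $Q$ gives $|Q(\xi+\i t\nabla F(x))-\lambda|\gtrsim\rho\gtrsim|x|^{-\epsilon}$ there — the deficit makes the symbol elliptic of size $|x|^{-\epsilon}$ precisely because the only ``characteristic'' direction of $\xi$ is $\hat x$, transverse (in the complex sense) to $\i\hat x$. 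The residual regions — near the non-exceptional part of $\mathcal B$, where the complex characteristic variety is a genuine graph transverse to the radial direction, and the thin region where the transverse Hessian of $Q$ competes with $\rho$ — are handled by choosing $B$ as a microlocal commutator/propagation multiplier (the standard mechanism exploiting transversality, possibly aided by a small bounded correction to the weight near $\mathcal B$), yielding a net gain $\gtrsim\int_{|x|>R_0}|x|^{-\epsilon}|u|^2$. This is where the combinatorial analysis of $Q(\xi+\i\sigma\omega)$ enters, to furnish a workable normal form of the characteristic variety near $\mathcal B$ and control the lower-order symbol terms.

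It then remains to balance the gain against the errors: the Hessian of $F$, of size $O(|x|^{-1})$, forces $\epsilon<1$; the smooth potential, contributing $\sum_\alpha|x|^{|\alpha|}|\partial^\alpha V_1|=O(|x|^{-\delta_1})$ after integration by parts, forces $\epsilon<\delta_1$; and $V_2$, admitting neither integration by parts nor microlocalization and entering through $\|V_2\e^{tF}\phi\|$-type terms, forces $\epsilon<2\delta_2$ after Cauchy--Schwarz against the gain. For $\epsilon<\min(\delta_1,2\delta_2,1)=\epsilon'$ all errors go into half the gain, giving $\|\e^{tF}\phi\|\leq C$ uniformly in $t\in[0,1)$ and $R$; letting $R\to\infty$, $t\uparrow1$ (and absorbing a harmless logarithmic loss into a marginally smaller $\epsilon$ if the estimate only yields $\langle x\rangle^{-\epsilon/2}\e^{tF}\phi\in L^2$) gives $\e^{\sigma_\c(|x|-|x|^{1-\epsilon})}\phi\in L^2$. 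I expect the main obstacle to be the gain step near $\mathcal B$: converting the qualitative non-degeneracy $\nabla_\xi Q\neq0$ into a quantitative subelliptic estimate of exactly order $|x|^{-\epsilon}$ that survives all the errors, which requires both the right microlocal multiplier and the combinatorial description of the complex characteristic variety.
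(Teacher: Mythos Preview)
Your framework is right --- the weight, the parametrization $t\uparrow 1$ (equivalently $\sigma\uparrow\sigma_\c$), and the three constraints producing $\epsilon'=\min(\delta_1,2\delta_2,1)$ all match the paper. But the core step, the claimed gain $\gtrsim|x|^{-\epsilon}$ near $\mathcal B$, has a genuine gap. Your deficit argument shows $|Q(\xi+\i t\nabla F)-\lambda|\gtrsim\rho\,|\omega\cdot\nabla_\xi Q(\xi+\i\sigma_\c\omega)|$ to first order. This is indeed $\gtrsim|x|^{-\epsilon}$ at \emph{exceptional} points, where the gradient is parallel to $\omega$. But at a non-exceptional point of $\mathcal B$ where $\omega\cdot\nabla_\xi Q(\xi+\i\sigma_\c\omega)=0$ (the gradient nonzero but purely tangential --- for instance $Q(\xi)=\xi_1^4+\xi_2^4$, $\omega=(1,1)/\sqrt2$, $\lambda<0$ has such points while \eqref{eq:16eqn} never holds), the first-order term vanishes and you only get $O(\rho^2)=O(|x|^{-2\epsilon})$, too small to absorb even the Hessian error $O(|x|^{-1})$. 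So the conjugated symbol is not elliptic of order $|x|^{-\epsilon}$ on all of $\mathcal B$, and the handwave to an unspecified ``propagation multiplier'' is exactly the missing content. (The combinatorial formula you invoke is a red herring here: in the paper it is used only for the super-exponential Theorems~\ref{thm:12.7.2.7.9b}--\ref{better}, not for Theorem~\ref{precise}.)

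The paper closes this uniformly, with no case split, by the choice of conjugate operator $A=\Opw(r_\epsilon Y)$, where $Q(\xi+\i\sigma\omega)-\lambda=X+\i Y$ and $\omega=\nabla r_\epsilon$. The Cauchy--Riemann equations give the exact identity
\[
\sigma^{-1}\{X,Y\}=\partial_\xi X\,\omega'(x)\,\partial_\xi X^T+\partial_\xi Y\,\omega'(x)\,\partial_\xi Y^T,
\]
and the whole point of $r_\epsilon$ (versus $r_1=\langle x\rangle$) is that its Hessian satisfies $\omega'(x)\ge c\,r^{-1-\epsilon}I$, strictly positive \emph{including in the radial direction}. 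Hence on the energy surface $r\{X,Y\}\gtrsim r^{-\epsilon}|\nabla_\xi Q(\xi+\i\sigma\omega)|^2\gtrsim r^{-\epsilon}$, uniformly, precisely because \eqref{eq:16eqn} fails. The leading symbol $r\{X,Y\}+2rY^2+\{X,r\}Y$ then dominates $2cr^{-\epsilon}+rY^2$; Fefferman--Phong quantizes this, and the surplus $rY^2$ is what absorbs the $V_2$ commutator and produces the factor $2$ in $2\delta_2$. In short, the radial convexity of $r_\epsilon$ does globally and automatically what your deficit argument does only at the exceptional points: it couples the nonvanishing of the \emph{full} gradient $\nabla_\xi Q$ to a gain of exactly $r^{-\epsilon}$.
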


Note that \eqref{eq:12} is necessary for  $\sigma_\c\in \Sigma_{\rm
  exc}(\lambda)$ while \eqref{eq:16eqn} is sufficient. We give an example in
Subsection \ref{subsec:example} for which the $2d +2$ real equations \eqref{eq:16eqn}  (for $2d$ unknowns) do not
have solutions  in a  generic sense. Whence  for  that example  the second
alternative of Theorem \ref{precise} is generic.

The following theorem eliminates the possibility of super-exponential decay at the expense of rather strong decay assumptions on the potential:

\begin{thm}\label{thm:12.7.2.7.9b} 
Suppose $V_2(x) = O(|x|^{-q/2 -\delta})$ and 
$\partial^{\alpha}V_1(x) = O(|x|^{-(\delta+q +|\alpha|)/2})$,  $1\le
|\alpha| \le q$, where $\delta > 0$.  Then
  $\sigma_\c<\infty$ unless 
  $\phi=0$.

\end{thm}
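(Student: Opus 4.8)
The plan is to argue by contradiction: assume $\phi\neq 0$ and $\sigma_\c=\infty$, i.e. $\e^{\sigma|x|}\phi\in L^2$ for every $\sigma>0$, and derive that $\phi\equiv 0$. The strategy mimics the classical Froese--Herbst-type argument for absence of super-exponential decay (cf. \cite{FHH2O1,IS2}), but carried out in the phase-space/microlocal framework that the rest of the paper already develops. First I would fix a large $\sigma$ and work with the conjugated operator $H_\sigma = \e^{\sigma F}H\e^{-\sigma F}$ for a suitable regularized weight $F\approx |x|$ (smoothed near the origin and possibly truncated at large scales so that $\e^{\sigma F}\phi$ genuinely lies in $L^2$ with controlled norm), so that $\phi_\sigma := \e^{\sigma F}\phi$ satisfies $(H_\sigma-\lambda)\phi_\sigma = 0$. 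Expanding, $H_\sigma = Q(p + \i\sigma\nabla F) + V$; the key quantitative input is a lower bound, for $|x|$ large, on the real part of the symbol $\Re Q(\xi + \i\sigma\omega)$ where $\omega = \nabla F \approx x/|x|$. Because $Q$ is a real elliptic polynomial of degree $q$, the top-order behaviour in $\sigma$ of $\Re Q(\xi+\i\sigma\omega)$ is $(-1)^{q/2}\sigma^q|\omega|^q$ up to lower order (here ellipticity and reality force $q$ even and pin down the sign of the leading coefficient), so after possibly replacing $Q$ by $-Q$ or shifting $\lambda$ we get a coercive estimate of the form $\pm\Re Q(\xi+\i\sigma\omega) \geq c\sigma^q - C(1+|\xi|^q)$ on the relevant region.

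The heart of the argument is then an energy/commutator identity: pairing $(H_\sigma-\lambda)\phi_\sigma=0$ with $\phi_\sigma$ and taking real and imaginary parts, one extracts that $\langle \phi_\sigma, (\Re H_\sigma - \lambda)\phi_\sigma\rangle = 0$ together with a control on $\langle\phi_\sigma,(\Im H_\sigma)\phi_\sigma\rangle$. Using the symbol lower bound above, $\Re H_\sigma - \lambda \geq c\sigma^q - \bigl(\text{operator of order }q\text{ in }p\bigr) - \|V\|_\infty - |\lambda|$, and the point of the strong decay hypotheses $V_2 = O(|x|^{-q/2-\delta})$, $\partial^\alpha V_1 = O(|x|^{-(\delta+q+|\alpha|)/2})$ is precisely that, after the $\e^{\sigma F}$ conjugation, the potential terms and the commutator terms $[Q(p),\e^{\sigma F}]\e^{-\sigma F}$ generated in forming $\Re H_\sigma$ produce contributions bounded by $C\sigma^{q}|x|^{-\delta}$ (or $C\sigma^{q-1}$, etc.) on the support of $\phi_\sigma$ — that is, lower order relative to the $c\sigma^q$ coming from the symbol. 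Here I would localize in $x$ to the region $|x|\geq R_\sigma$ (outside which $\phi$ is negligible because it decays super-exponentially) and choose $R_\sigma\to\infty$ as $\sigma\to\infty$ at the right rate so that the $|x|^{-\delta}$ gain beats the $\sigma$-loss in the error terms; the quantitative matching of these scales is where the precise exponents $q/2+\delta$ and $(\delta+q+|\alpha|)/2$ in the hypothesis get used. The momentum-order-$q$ part of $\Re H_\sigma$ must be absorbed by a Mourre/ellipticity bound or simply by noting $Q(p)\geq -C$ after the leading term is split off; alternatively one uses an a priori bound $\|p^\alpha\phi_\sigma\|\leq C_\sigma\|\phi_\sigma\|$ obtained from elliptic regularity for $H$.

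Putting these estimates together yields, for all sufficiently large $\sigma$, an inequality of the shape
\begin{align*}
0 = \langle\phi_\sigma,(\Re H_\sigma-\lambda)\phi_\sigma\rangle \geq \bigl(c\sigma^q - C\sigma^{q}R_\sigma^{-\delta} - C\sigma^{q-1} - C\bigr)\|\phi_\sigma\|^2,
\end{align*}
so that once $R_\sigma$ is large (equivalently $\sigma$ is large) the coefficient on the right is strictly positive, forcing $\|\phi_\sigma\|=0$ and hence, since $\phi_\sigma = \e^{\sigma F}\phi$ restricted to $|x|\geq R_\sigma$ carries all the $L^2$-mass of $\phi$, $\phi\equiv 0$. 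The main obstacle I anticipate is the careful bookkeeping of the commutator terms $[Q(p),\e^{\sigma F}]$: since $Q$ has degree $q$, this commutator expands into $q$ terms involving up to $q$ derivatives of $F$ and powers of $\sigma$ up to $\sigma^{q-1}$ hitting $\phi_\sigma$, and one must verify that each such term, together with the potential pieces, is genuinely subleading after the $x$-localization — this is exactly the computation that dictates the stated decay rates on $V_1,V_2$, and getting the powers of $|x|$ and $\sigma$ to balance (rather than merely "lower order for fixed $\sigma$") is the delicate point. A secondary technical issue is justifying the formal pairings, i.e. showing $\phi_\sigma$ lies in the form domain of $H_\sigma$ and that all integrations by parts are legitimate; this is handled by the usual regularization of $F$ (replacing $|x|$ by $F_\epsilon$ bounded for each $\epsilon$, with $\nabla F_\epsilon$ uniformly bounded) and a limiting argument, which the paper's earlier machinery should already supply.
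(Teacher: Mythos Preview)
Your plan has a genuine gap at its core: the real-part energy identity $\langle\phi_\sigma,(\Re H_\sigma-\lambda)\phi_\sigma\rangle=0$ does \emph{not} carry the coercivity you claim. Take the simplest case $Q(\xi)=\xi^2$. Then $\Re Q(\xi+\i\sigma\omega)=\xi^2-\sigma^2|\omega|^2$, which is indefinite; at the operator level the identity reads $\langle p^2\rangle_{\phi_\sigma}-\sigma^2\langle|\omega|^2\rangle_{\phi_\sigma}\approx -\langle V\rangle_{\phi_\sigma}=O(1)$, i.e.\ it only tells you that $\langle p^2\rangle_{\phi_\sigma}\approx\sigma^2$, not that $\phi_\sigma=0$. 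Your suggested fixes do not work: replacing $Q$ by $-Q$ makes the $\sigma$-term positive but then the $-p^2$ part is unbounded below, not ``$\geq -C$''; and the a priori bound from elliptic regularity for the conjugated equation is $\|p^\alpha\phi_\sigma\|\lesssim\sigma^{|\alpha|}\|\phi_\sigma\|$, with the constant growing exactly at the rate that cancels your $c\sigma^q$ gain. For $Q(\xi)=(\xi^2)^2$ the same obstruction appears (e.g.\ $\Re Q(\sigma\omega+\i\sigma\omega)=-4\sigma^4$), and in general the real and imaginary parts of $Q(\xi+\i\sigma\omega)$ each vanish on codimension-one sets, so neither has a sign.

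What the paper actually uses is \emph{not} $\Re H_\sigma$ but the commutator $[Q(a),Q(a^*)]$ with $a=p-\i\sigma\omega$, equivalently $2\i[\tilde X,\tilde Y]$ to leading order. The positivity here comes from the \emph{convexity} of the weight $r=r_\epsilon$ (so that $P=2\sigma\omega'>0$), not from the sign of the symbol. An exact combinatorial expansion gives $[Q(a),Q(a^*)]=F+E$ with
\[
F=\sum_{m\ge1}(m!)^{-1}\sum_{J_m,K_m}(\partial_{J_m}Q)(a^*)\,P_{J_mK_m}\,(\partial_{K_m}Q)(a)\ \ge\ c\sum_{\alpha\neq0}\sigma^{|\alpha|}\,\partial^\alpha Q(a^*)\,r^{-(1+\epsilon)|\alpha|}\,\partial^\alpha Q(a),
\]
and $E$ negligible for large $\sigma$ (small $\epsilon$). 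Inserting this into the identity
\[
\langle[Q(a),Q(a^*)]\rangle_{\phi_\sigma}+2\Re\langle[Q(a),V_1]\rangle_{\phi_\sigma}+\|(Q(a)+V_1-\lambda)\phi_\sigma\|^2=\|V_2\phi_\sigma\|^2
\]
is what makes the stated exponents on $V_1,V_2$ appear: the $m=q$ term of $F$ contributes $\sigma^q r^{-(1+\epsilon)q}$, which for small $\epsilon$ dominates both $|V_2|^2=O(r^{-q-2\delta})$ and the Cauchy--Schwarz errors $r^{(1+\epsilon)|\alpha|}|\partial^\alpha V_1|^2=O(r^{\epsilon|\alpha|-q-\delta})$ coming from $[Q(a),V_1]$. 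No $x$-localization with a moving radius $R_\sigma$ is needed; the balance is between powers of $\sigma$ and powers of $r$ in a single global inequality.
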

 
\vspace{2mm} 

The restriction on the potentials in Theorem \ref{thm:12.7.2.7.9b} is in general not optimal.  In the special cases $Q(\xi) = \xi^2$ and $Q(\xi) = (\xi^2)^2$
we improve the bounds used to prove Theorem \ref{thm:12.7.2.7.9b} to
get better results in the next theorem. We prove this theorem in
Subsection \ref{subsection:Improvement for}.  There we use specific
properties of the above polynomials for which our verification of the bounds appears very ad hoc.  The main virtue of  Theorem \ref{thm:12.7.2.7.9b}
 is its generality.

\begin{thm} \label{better} Suppose $Q(\xi) = |\xi|^{2j}, j= 1$ or
  $j=2$. Suppose  $V_2(x) =
O(|x|^{-\delta - j/2})$ and    
$\partial^{\alpha} V_1(x) = O(|x|^{-(\delta + j +|\alpha|)/2})$, $1\le
|\alpha| \le j$,  where   $\delta >0$. Then $\sigma_\c<\infty$ unless 
  $\phi=0$.
\end{thm}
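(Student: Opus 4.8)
The plan is to deduce Theorem \ref{better} from the same scheme of commutator/Carleman-type estimates that underlies Theorem \ref{thm:12.7.2.7.9b}, but with sharpened weighted estimates tailored to the two explicit polynomials $Q(\xi)=|\xi|^2$ and $Q(\xi)=|\xi|^4$. Specifically, I would revisit the key operator inequality used in the proof of Theorem \ref{thm:12.7.2.7.9b} --- a lower bound of the form
\begin{align*}
\Re\bigl\langle \e^{F}\psi,\, (H-\lambda)\e^{-F}\psi\bigr\rangle \geq c\,\|\langle x\rangle^{-s}\e^{F}\psi\|^2 - (\text{remainder}),
\end{align*}
where $F=F(|x|)$ is a convex weight of linear growth with a large slope $\sigma$, $\psi$ a suitable localization of $\phi$, and $s$ depends on $q=\deg Q$. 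For general $Q$ the remainder is controlled by the potential hypotheses $V_2=O(|x|^{-q/2-\delta})$, $\partial^\alpha V_1=O(|x|^{-(\delta+q+|\alpha|)/2})$; the claim is that for $j=1,2$ one can afford the weaker decay $V_2=O(|x|^{-j/2-\delta})$, $\partial^\alpha V_1=O(|x|^{-(\delta+j+|\alpha|)/2})$.

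The key steps, in order, would be: (1) For $Q(\xi)=|\xi|^{2j}$, compute the conjugated symbol $Q(\xi+\i\sigma\omega)=(|\xi|^2 - \sigma^2 + 2\i\sigma\,\omega\cdot\xi)^j$ explicitly and extract the real and imaginary parts as polynomials in $\sigma$; the factorized structure $(\,\cdot\,)^j$ is what makes $j=1,2$ special and lets one get an improved positivity of the symplectic/commutator form on the relevant region of phase space. (2) Translate this into an improved first-order commutator estimate: one wants to show that the operator $\i[H, A_F]$ (with $A_F$ the generator built from $F$) dominates a weight $\langle x\rangle^{-j}$ rather than $\langle x\rangle^{-q}$ on the range of the spectral localization, using the ellipticity of $Q$ to handle high frequencies and the explicit form of $Q$ to handle the ``bad'' frequency region near $|\xi|^2=\sigma^2$. (3) Verify that the error terms generated by commuting $\e^F$ through $Q(p)$ and through $V_1$ --- these are lower-order in $p$ with coefficients decaying like $|x|^{-k}$ for various $k$ --- are absorbed by the improved main term precisely under the stated, weaker potential hypotheses; here the gain from $\langle x\rangle^{-q}$ to $\langle x\rangle^{-j}$ is what buys the relaxation from $-q/2-\delta$ to $-j/2-\delta$ in the $V_2$ assumption and the corresponding relaxation for $V_1$. (4) Finally, run the standard bootstrap: if $\sigma_\c=\infty$ then $\e^{F}\phi\in L^2$ for every slope, feed this into the improved estimate to conclude $\langle x\rangle^{-j}\e^F\phi=0$ on a suitable region, and propagate to get $\phi\equiv 0$ via ellipticity and unique continuation, exactly as in Theorem \ref{thm:12.7.2.7.9b}.

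I expect the main obstacle to be step (2)--(3): establishing the improved weighted commutator lower bound with the \emph{specific} power $\langle x\rangle^{-j}$. For general elliptic $Q$ the natural loss in such estimates is governed by the order $q$ (one loses a full power of $\langle x\rangle$ for each order of differentiation when commuting the exponential weight through $Q(p)$), and beating this down to $j=\tfrac{q}{2}$ requires exploiting the perfect-power structure $Q=(|\xi|^2)^j$ to reorganize the $2j$-th order symbol as a $j$-fold product of second-order symbols, each contributing only ``half'' a power. Making this rearrangement rigorous at the operator level --- i.e., writing $Q(p)$-related error terms as products of factors each of which is a manifestly controlled second-order operator with $|x|^{-1}$-type coefficients, rather than estimating the $2j$-th order object monolithically --- is the delicate, and admittedly ad hoc, core of the argument, and is presumably why the authors flag the verification as ``very ad hoc'' and treat only $j=1,2$. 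A secondary technical point is keeping the localization in $\sigma$ and in phase space compatible with this factorization so that no new singular regions are introduced.
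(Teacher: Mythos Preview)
Your proposal correctly identifies the goal --- a sharpened weighted lower bound allowing $V_2=O(|x|^{-j/2-\delta})$ in place of $O(|x|^{-q/2-\delta})$ --- but the mechanism you describe is not the paper's and, more to the point, is the very route the paper says it could not make work. The proofs of Theorems \ref{thm:12.7.2.7.9b} and \ref{better} abandon the pseudodifferential/symbol calculus entirely (the $X$, $Y$, $A$ machinery of Sections \ref{sec:Ideas of proofs}--\ref{sec:start}); see the paragraph following \eqref{eq:posFirst}, where the authors note that in the $\sigma\to\infty$ regime the competition between growth in the phase-space variables and growth in $\sigma$ ``appears too subtle to be resolved'' by any parameter-dependent calculus they tried. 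Your steps (1)--(3) are squarely in that abandoned framework, and you give no mechanism for keeping the estimates uniform as $\sigma\to\infty$.

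What the paper actually does is operator-algebraic, not symbolic. With $a=p-\i\sigma\omega$ it proves an \emph{exact} combinatorial formula \eqref{commutator formula} for $[Q(a),Q(a^*)]$ and combines it with the identity \eqref{eq:20}, which comes directly from the eigenvalue equation (no localized $\psi$, no conjugate operator $A_F$, no unique continuation --- the conclusion $\phi_\sigma=0$ is immediate once the commutator term dominates for large $\sigma$). The improvement for $j=1,2$ then comes from the symmetrized lower bound \eqref{Sdef} together with explicit CCR-type identities: for $Q=\xi^2$ one uses $a_j^*a_j+a_ja_j^*=2(p_j^2+\sigma^2\omega_j^2)$ to obtain $CS\geq\sigma^2 f_1^2$; for $Q=(\xi^2)^2$ one applies a specific identity for $b^2(b^*)^2+(b^*)^2b^2$ with $b=a_j$ to extract a term $\geq 2\sigma^4\omega_j^4$, yielding $CS\geq\sigma^4 f_2^2$. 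The commutator $[Q(a),V_1]$ is also recomputed by hand in each case so as to match these bounds. This is where the ``ad hoc'' character lies: concrete operator rearrangements exploiting $[a_j,a_k^*]\geq 0$, not a general factorization principle at the symbol level. Your intuition that the perfect-power structure $(|\xi|^2)^j$ is the source of the gain is correct, but the way it is actually exploited is exact functional calculus in the $a_j$'s, and your proposal does not arrive at that.
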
 
In Lemma \ref{lemma:limit-meth-exampl} we show
 optimality of the  bound used to prove Theorem \ref{better} for
 $(\xi^2)^2$. Whence a possible further improvement of the decay rates
 specified for 
 $(\xi^2)^2$  would require  a
 completely new method of proof.
\begin{remarks}\label{remarks:results}
  \begin{enumerate}[1)]
\item For all our
    results  we can allow $V_2$ to be complex-valued virtually without any
    complication in the proofs, however we need 
$V_1$ to be real-valued and $\lambda$ to be real.
\item\label{item:CT}
If $\lambda \notin \Ran Q$ and $V(x)=o(1)$,  Theorems \ref{thm:start}  and
 \ref{thm:12.7.2.7.9}  give
\begin{align*}
 \sigma_c \ge \inf \{\sigma> 0| \,\exists (\omega,\xi) \in
 S^{d-1}\times \R^d: Q(\xi+\i \sigma \omega)=\lambda\}. 
\end{align*} There is a different proof of this bound using   a
Combes -Thomas argument \cite{CT}. 
\item \label{item:1bb} The result for $Q(\xi) = \xi^2$ in Theorem
  \ref{better} is well-known.  See for example \cite{FHH2O3, FHH2O2}.
\item \label{item:1cc} It is possible to treat some  variable coefficient
  cases for most of our results.  (We do not make these generalizations precise.)  This possibility  is due to the fact that  most of our results are based on the general
  theory of 
  pseudodifferential operators which is rather robust. Only Theorems
  \ref{thm:12.7.2.7.9b} and \ref{better} which are  based   on exact combinatorial formulas 
    do  not  readily generalize to variable coefficient cases.  In
    concrete situations as in Theorem \ref{better}  a perturbative argument
   works. Thus one can include  classes  of first and second order
   polynomials with variable
   coefficients for  the examples 
 $\xi^2$ and $(\xi^2)^2$. This follows readily by a little refinement
 of  the improved bounds, so-called subelliptic estimates, see \eqref{eq:26} and
 \eqref{eq:27}. 
\item \label{item:1ccBB }Another potential direction
 of generalization concerns elliptic real-analytic dispersion relations
 (rather than  elliptic polynomials), for example
 $Q(\xi)=(1+|\xi|^2)^{q/2}$ for any real 
 $q>0$.   Indeed we  expect that  our
 methods could yield versions of Theorems
 \ref{thm:start}--\ref{precise} for a  general class of such symbols. This
 would require (omitting further details)  a uniform analyticity
 radius say $\sigma_{\rm a}>0$ (in other words that there is an analytic extension of the
 symbol to a
 $d$-dimensional strip of width $2\sigma_{\rm a}$) and  conditions of   at most
 polynomial growth and ellipticity (with constants  being  locally uniform in the 
 imaginary part). Of course the hypothesis $\sigma_\c<\sigma_{\rm a}$ should be added
 to  the corresponding versions of Theorems \ref{thm:12.7.2.7.9} and
 \ref{precise}.  See \cite{MP2} for estimates leading to $\sigma_\c >
 0$ for a class of such dispersion relations under conditions
 overlapping the condition  \ref{item:2} of Theorem \ref{thm:start}.
\item \label{item:2g}
 One way to think about the
  conditions \eqref{eq:10} and \eqref{eq:11} is the following: Write
\begin{align*}
  Q(\xi+\i \sigma\omega(x))-\lambda=(X+\i Y)(\xi+\i \sigma\omega(x));\omega(x)=\hat x=x/|x|,
\end{align*} and look at
  the Hamiltonian $h(x,\xi)=X(\xi+\i \sigma\omega(x))$. Due to the Cauchy-Riemann
equations,  for any  Hamiltonian orbit $(x,\xi)$
  for $h$ (with $x\neq 0$)
  \begin{align}\label{eq:13}
  \begin{cases}
    \d_\tau \omega=|x|\dot \omega&=P_{\perp}(\omega)\partial_\xi X(\xi+\i \sigma\omega),\\
\d_\tau\xi= |x|\dot \xi&=\sigma P_{\perp}(\omega)\partial_\xi Y(\xi+\i \sigma\omega).
  \end{cases}
  \end{align} This is a reduced system of ordinary differential
  equations in the rescaled time $\tau$.
The conditions \eqref{eq:11} are exactly the conditions for a
fixed point of the flow  \eqref{eq:13}. In general $X(\xi+\i\sigma\omega)$ is constant  while $Y(\xi+\i
\sigma\omega)$ is growing for the flow  \eqref{eq:13}. 
\end{enumerate}
  \end{remarks}

\subsection{Example, $Q(\xi) - \lambda = G(\xi^2)$}\label{subsec:example}

The main object of this section is to gain some understanding of  the consequences of \eqref{eq:10} and  \eqref{eq:11}  for $\sigma > 0$.  For general $Q$ these two equations are actually $2 + 2(d-1) = 2d$ real scalar equations for the $2d$ variables $\xi, \omega, \mand \sigma$.  We take $\omega \in S^{d-1}$.  In the case at hand there is an overall rotational symmetry which implies that if $(\xi, \omega, \sigma)$ is a solution then for any real orthogonal matrix $R, (R\xi, R\omega, \sigma)$ is also a solution.  Let $z = (\xi + \i\sigma \omega)^2$.  We have two equations:
\begin{align}
G(z) = & 0,\\
P_{\perp}(\omega)\nabla_{\xi}Q(\xi +\i \sigma \omega) = & 2G'(z)P_{\perp}(\omega)\xi =0.
\end{align}
If $P_{\perp}(\omega)\xi =0$ then $ \xi = \pm |\xi| \omega$ so that
$G(z) = 0$ is the same as $G( (\pm|\xi| + \i\sigma)^2) =
0$.  Note that for each
pair of complex conjugate
 roots of $G$ there will generally  correspond two roots $\zeta = \pm|\xi| + \i \sigma$ in the upper half plane of the polynomial $\tilde G(\zeta): = G(\zeta^2)$.  On the
 other hand if  $P_{\perp}(\omega)\xi \ne 0$ then we have the two
 equations $G(z) = G'(z) = 0$ which require $G$ to have a multiple
 zero.  If $Q$ has degree $q$ this can only happen at $ \le \frac{q
   -2}{2} $ values of  $ \lambda$. If $\lambda$ is not one of these at most  $ \frac{q
   -2}{2} $ possible real numbers there are $ \le q/2$ exceptional numbers.   In the case of $G( z ) = z^2 - \lambda$,
 involving the bilaplacian, if $ \lambda \ne 0$ there is exactly one
 solution with positive $\sigma$.  Namely $\sigma = \lambda^{1/4}$ if
 $\lambda > 0$ and $\sigma = (-\lambda/4)^{1/4}$ if $\lambda < 0$. On
 the other hand by 
 the construction below each of these cases is realized by a compact
 support potential.

In the situation of Theorem \ref{precise}  where
we have $Q(\xi + \i\sigma\omega) = \lambda \mand \nabla Q(\xi +
\i\sigma\omega) = 0$,  either we are in the non-generic case $G(z) =
G'(z) = 0$ or we have $\xi + \i\sigma \omega = 0$, an impossible
situation since $\sigma > 0$.

We remark that as in the $N$-body problem where for a fixed negative
eigenvalue there are typically many exponential decay rates possible
determined by the thresholds for the problem,  we have a similar
situation for $Q(p) + V(x)$ even for $V$ of compact support.  

Let us for any  $\lambda \in \R$ consider  any  zero $z\in
\R\setminus [0,\infty)$ of the function $G$. There is a unique
$k\in\C$ with $\sigma:=\Im k>0$ and $z=k^2$, and according to the
above discussion 
 $\sigma \in \Sigma_{\rm exc}(\lambda)$.  We will display a real $V \in
C_\c^{\infty}(\mathbb{R}^d)$ and a
function $\phi\in L^2$ satisfying $(G(-\Delta) + V) \phi = 0$  with critical
decay rate $\sigma_\c = \sigma$.   Letting  $K$ be the integral kernel of
$(-\Delta - z)^{-1}$ the function $\tilde\phi(x) = \text{Re}K(x,0)$
satisfies $G(-\Delta) \tilde\phi = 0$ for $x \ne 0$.  Since
$\tilde\phi(x) > 0$ for small $|x|$ (see \cite[p. 360]{AS} and
\cite[p. 232--233]{T}) we can modify  $\tilde \phi$ there to obtain a
function $\phi$ with $V: = G(-\Delta)\phi/\phi$ smooth with compact
support.  To carry out this modification choose $R$ so that $
\tilde\phi > 0$  for $0 < |x| < R$.  Let $ \chi \in
C_\c^{\infty}(\mathbb{R}^d)$ with $0 \le \chi \le 1$ and $\chi = 1$ if
$0 \le |x| < R/2 \mand \chi = 0$ if $ |x| > 3R/4$.  Let $ \phi = \chi
+ (1-\chi) \tilde \phi$.   Then $\phi$ is smooth and positive for $|x|
< R$ while $G(-\Delta)\phi$ has support in $|x| \le 3R/4$.  Thus
indeed $V$
is real, smooth and  compactly supported.  Clearly $\sigma_\c = \sigma$ (see
\cite[p. 364]{AS} and \cite[p. 232--233]{T}).

These considerations show that for $Q$ any elliptic polynomial in
$\xi^2$, except  for at most $(q-2)/2$ real $\lambda$'s, $\sigma \in
\Sigma_{\rm exc}(\lambda)$ is sufficient for $\sigma$ to be the  critical decay rate
 of an eigenfunction with eigenvalue $\lambda$ and for some  compactly supported real potential $V$. The converse statement, necessity,  is
stated for a more general $Q$ in Theorem \ref{thm:12.7.2.7.9} \ref{item:1cd}.


\subsection{Notation and calculus considerations}\label{subsec:Notation} We shall use
the Weyl calculus for symbols in  $S(m,g)$ where the weight $m$ may vary but the
metric   will be
\begin{align*}
  g=\inp{x}^{-2}\d x^2+\inp{\xi}^{-2}\d \xi^2.
\end{align*} Here and henceforth $\inp{x}=(1+|x|^2)^{1/2}$ and similarly with $x\to
\xi$; let also $\hat x= x/|x|$.  See \cite[Chapt. XVIII]{Ho} for an exposition of the Weyl
calculus (including the results stated below). Recalling  $q=\text{degree}(Q)$ obviously $Q\in
S(\inp{\xi}^q,g)$. 

\begin{subequations}
Recall the $L^2$-boundedness result, here amounting to
\begin{align}
  \label{eq:8L2B}
  a\in S(1,g) \Rightarrow   \|\Opw(a)\|\leq C,
\end{align} where $C\geq 0$ can be chosen independently of $a$ from
any bounded family of symbols in  $S(1,g)$ (i.e. any family for which
each semi-norm has a uniform bound). A family of such symbols
is  said to be uniformly in $S(1,g)$. We shall use similar terminology  
for symbols in $S(m,g)$.

By the Fefferman-Phong inequality
\begin{align}
  \label{eq:8FP}
  a\in S(\inp{x}^{2}\inp{\xi}^{2},g), \,a\geq 0\Rightarrow   \Opw(a)\geq -C,
\end{align} where $C\geq 0$ can be chosen independently of $a$ from
any bounded family of such symbols. 
 \end{subequations}

Let $\chi_-$ and $\chi_+$ denote smooth non-negative functions on $\R$ with
$\chi_-(t)=1$ for $t\leq 1$, $\chi_-(t)=0$ for $t\geq 2$ and 
\begin{align}\label{eq:8quad}
  \chi_-^2+ \chi_+^2=1.
\end{align}
  For any $\kappa>0$ we define $\chi_-(t\leq
\kappa)=\chi_-(t/ \kappa)$ and $\chi_+(t\geq
\kappa)=\chi_+(t/ \kappa)$.

In this paper bounding constants are typically denoted by $c$,  $C$ or $C_j$. They may
vary from line to line. For any operator $A$ (or form) we abbreviate
the inner product 
 $\inp{\psi,A\psi}=\inp{A}_\psi$.
\subsubsection{Distorted $|x|$}\label{subsubsec:Distorted} 
We are going to use two qualitatively different
distorted versions of the function $|x|$ on $\R^d$. The first one is
\begin{subequations}
\begin{align}
  \label{eq:r_1}
  r(x)=r_1(x)=\inp {x}.
\end{align} The second one is given in terms of a parameter
$\epsilon\in(0,1)$ as 
\begin{align}
  \label{eq:r_epsilon}
  r(x)=r_\epsilon(x)=\inp {x}-\inp {x}^{1-\epsilon}+1.
\end{align}  
\end{subequations} Note that these are positive smooth  strictly convex functions  that
at infinity  behave as  $|x|$. However while $r_1$ tends to be
degenerately convex at infinity (just as   $|x|$),  this deficiency 
appears somewhat  cured for $r_\epsilon$ (in particular in the regime
where $\epsilon$ is small). In both cases we shall use the notation  $\omega=\omega(x)=\grad
r$.   The functions $r_{\epsilon}$  were used in \cite{RT} in a different context.  See also \cite{IS1}.

\section{Ideas of proof of Theorems  \mbox{\rm \ref{thm:12.7.2.7.9}} and \ref{precise}}\label{sec:Ideas of proofs}
Consider the  function $r$ given by either \eqref{eq:r_1}
or \eqref{eq:r_epsilon}.

We introduce for  $\sigma\geq 0$
\begin{align*}
  Q(\xi+\i \sigma\omega(x))-\lambda=(X+\i Y)(\xi+\i \sigma\omega(x)).
\end{align*} 
This is to  leading order the  symbol of
\begin{align*}
  \e^{\sigma r}\{Q(p)-\lambda\}\e^{-\sigma r}=Q(p+\i \sigma \omega(x))-\lambda.
\end{align*}

Also we introduce 
the
distorted energy surface 
\begin{align}\label{eq:15}
S_{\sigma,\lambda}=\{(x,\xi)\in \R^d\times \R^d| Q(\xi+\i \sigma\omega(x))=\lambda\}=\{X= Y=0\}.
\end{align} 

 Now suppose $\phi$ is an eigenfunction with eigenvalue $\lambda$,
i.e. $(H-\lambda)\phi=0$. Suppose that $\phi_\sigma:=\e^{\sigma r}\phi\in L^2$ for some
small $\sigma>0$ (We do not justify this assumption here.  It is
proved in Section \ref{sec:start} under the assumptions of Theorem
\ref{thm:start}.)  So we consider an eigenfunction $\phi$ with
$\phi_{\sigma_0}\in L^2$ for some  $\sigma_0>0$  and want to derive a priori
bounds of $\phi_{\sigma}$ for $\sigma< \sigma_\c$.  See \eqref{eq:3}
for an example of such bound.

The result Theorem \ref{thm:12.7.2.7.9} \ref{item:1c} can be interpreted as an energy estimate which will not
 be discussed here. On the other hand Theorem  \ref{thm:12.7.2.7.9}
 \ref{item:1cd} and Theorem   \ref{precise}
 are strongly based on (strict) positivity of a certain
commutator to be explained. 

Introducing the  shorthand notation
\begin{align*}
  \eta=\sigma \omega(x)\mand \zeta =\xi+\i \eta,
\end{align*} the Cauchy-Riemann equations for $\zeta\to Q(\zeta)$ and the
chain rule allow us to  calculate the
Poisson bracket
\begin{align}\label{eq:16pois}
  \sigma^{-1}\{X,Y\}=\partial_\xi X\omega'(x)\partial_\xi^T X+\partial_\xi Y\omega'(x)\partial_\xi^T Y.
\end{align}
Whence in particular
\begin{align}
  \label{eq:6}
  \{X,Y\}\geq 0.
\end{align}

We propose to consider the {\it conjugate operator}  $A$ with Weyl symbol
\begin{align*}
  a=rY(\zeta)=rY(\xi+\i \sigma\omega(x)).
\end{align*}

Consider 
\begin{align*}
  \i[Q(p),\e^{\sigma r}A\e^{\sigma r}]=\e^{\sigma r}\parb{\i[\tilde
    X,A]+2\Re(\tilde YA)}e^{\sigma r},
\end{align*} where  $\tilde X=\Re\parb{\e^{\sigma
    r}Q(p)\e^{-\sigma r}}-\lambda$ and  $\tilde Y=\Im\parb{\e^{\sigma r}Q(p)\e^{-\sigma r}}$.

To leading order the symbol of the operator between exponentials to the
right has symbol
\begin{align*}
  r\{X,Y\}+2rY^2+\{X,r\}Y.
\end{align*} Note that the second term is also non-negative.

In the rest of our discussion in this section we only discuss the proof of Theorem \ref{precise}.

Let us note that a sufficient condition for $S_{\sigma, \lambda}$   to be  a codimension  $2$ submanifold of
$\R^{2d}$ is 
\begin{align}
  \label{eq:15b}
  \nabla_\xi Q(\xi +\i \sigma\omega(x))\neq 0 \text{ for all } (x,\xi)\in S_{\sigma, \lambda}.
\end{align} This is due to the Cauchy-Riemann equations.

Under the
regularity condition \eqref{eq:15b}  there is a chance
of deriving some  positivity from \eqref{eq:16pois}, gaining
positivity from the first term. In fact, discussing
here  only $r=r_\epsilon$, a slight strengthening of the
regularity condition \eqref{eq:15b} 
yields  the following bounds in a neighbourhood of  $S_{\sigma, \lambda}$,
\begin{align*}
  r\{X,Y\}&\geq 3cr^{-\epsilon},\\
\{X,r\}Y&\geq - rY^2-C r^{-1},
\end{align*} 
and therefore also
\begin{align}\label{eq:1}
  r\{X,Y\}+2rY^2+\{X,r\}Y\geq 2cr^{-\epsilon}+rY^2.
\end{align} 

Next by using a proper energy
cut-off and  \eqref{eq:8FP} we show that \eqref{eq:1}
is preserved under quantization and, more precisely,  we derive a bound like
\begin{align}
  \label{eq:2}
  c\|r^{-\epsilon/2}\phi_\sigma\|^2\leq -\inp{\i[V,A]}_{\phi_\sigma}-\|r^{1/2}\tilde Y\phi_\sigma\|^2+C\|\phi\|^2.
\end{align} Now we insert  the splitting $V=V_1+V_2$ of Theorem \ref{precise}
 into the first term to  the right. Assuming $\epsilon<\delta_1$ we
can estimate the contribution from $V_1$ by doing the
commutator. Assuming also $\tfrac 12(1+\epsilon)<1/2 +\delta_2$ we
can estimate the contribution from $V_2$ by undoing the
commutator using the second term to  the right. Thus we have converted  \eqref{eq:2} into  
\begin{align}
  \label{eq:3}
  \|r^{-\epsilon/2}\phi_\sigma\|^2\leq C\|\phi\|^2.
\end{align} 
Finally   by taking  $\sigma\nearrow \sigma_\c$ in \eqref{eq:3} we
 obtain Theorem \ref{precise}.

\section{Elaboration on energy localization}\label{sec:energy}
This section contains various preliminary bounds valid for any eigenfunction
$\phi$, $(H-\lambda)\phi=0$, with $\sigma_\c>0$.  These are, more precisely,
a priori bounds on  $\phi_\sigma=\e^{\sigma r}\phi$ for $\sigma\in [0,
\sigma_\c)$. Here $r$ is the  function given by either \eqref{eq:r_1}
or \eqref{eq:r_epsilon}. The
bounds are uniform in $\sigma$ varying in any bounded subset of $[0,
\sigma_c)$. In the
last subsection we establish some  uniform bounds for
 certain non-convex  parameter-dependent approximations. In this
 section we shall only need boundedness of $V$ (i.e. decay assumptions
 will not be
 needed).

 \subsection{Sobolev regularity}\label{subsec:Sobolev regularity} Let
 us note that 
  \begin{align}
   \label{eq:16sob}
   \forall \sigma\in [0,
\sigma_\c),\;s\in \R: \;\phi_\sigma\in r^{-s}H^q,
  \end{align} where $H^q$ is the standard Sobolev space of order $q$,
  \begin{align*}
    H^q=\{ \psi\in L^2|\partial^\alpha\psi\in L^2\text{ for
    }|\alpha|\leq q\}.
  \end{align*} In fact letting $f(r)=\sigma r+s\ln r$ we can write
  \begin{align*}
    \e^{f}Q(p)\e^{-f}=\Opw(a),
  \end{align*} where $a$ is an elliptic symbol  in $S(\inp{\xi}^q,g)$,
  cf. Appendix \ref{sec:appA}. Then \eqref{eq:16sob} follows from
  \begin{align*}
    \Opw(a)r^{s}\phi_\sigma=r^{s}\e^{\sigma r} (\lambda-V)\phi\in L^2.
  \end{align*}

\subsection{Energy bounds}\label{subsec:energy bounds}
For $s\in \R$ we calculate on the one hand
\begin{subequations}
  \begin{align}\label{eq:4}
  \|r^s(\tilde X+
\i \tilde Y)\phi_\sigma\|^2=\|r^s\tilde X \phi_\sigma\|^2+\|r^s\tilde
Y \phi_\sigma\|^2-2\Im \inp{\tilde X r^{2s}\tilde Y}_{\phi_\sigma},
\end{align} while on the other hand
\begin{align}\label{eq:5}
  \|r^s(\tilde X+
\i \tilde Y)\phi_\sigma\|^2=\|r^sV\phi_\sigma\|^2.
\end{align} 
\end{subequations}
Whence to get a useful bound we need to examine the last
term to  the right in  \eqref{eq:4}:
\begin{align*}
  &-2\Im \inp{\tilde X r^{2s}\tilde Y}_{\phi_\sigma}\\&\geq\inp{\i [\tilde
    X ,\tilde Y]}_{r^s\phi_\sigma}-\tfrac15 (\|r^s\tilde
  Y\phi_\sigma\|^2+\|r^s\tilde X\phi_\sigma\|^2)
-C_1\|r^{s-1}\phi_\sigma\|^2-C_1\|r^{s-1}\tilde X \phi_\sigma\|^2\\&\geq\inp{\i [\tilde
    X ,\tilde Y]}_{r^s\phi_\sigma}-\tfrac14 (\|r^s\tilde
  Y\phi_\sigma\|^2+\|r^s\tilde X\phi_\sigma\|^2)
-C_2\|r^{s-1}\phi_\sigma\|^2-C_2\|V\phi\|^2.
\end{align*} Here we used the ellipticity of $Q(p)$ and $ \tilde X $
estimating, cf. \eqref{eq:8FP},  like
\begin{align}
  \label{eq:ellip}
  r^t\inp{p}^{2q} r^t\leq C_{1,t}\tilde X r^{2t}\tilde X+C_{2,t}r^{2t}.
\end{align}

Using \eqref{eq:8FP}, \eqref{eq:6} and \eqref{eq:ellip} we obtain
\begin{align*}
&\inp{\i [\tilde
    X ,\tilde Y]}_{r^s\phi_\sigma}\\ &\geq -C_1(\|r^{s-1}\tilde
  X\phi_\sigma\|^2+\|r^{s-1}\phi_\sigma\|^2)\\ 
\\ &\geq -\tfrac14 \|r^s\tilde X\phi_\sigma\|^2
-C_2\|r^{s-1}\phi_\sigma\|^2-C_2\|V\phi\|^2.
\end{align*} In combination with \eqref{eq:4} these bounds yield
\begin{align*}
  \|r^s(\tilde X+ 
\i \tilde Y)\phi_\sigma\|^2\geq\tfrac12 (\|r^s\tilde
  Y\phi_\sigma\|^2+\|r^s\tilde X\phi_\sigma\|^2)
-C\|r^{s-1}\phi_\sigma\|^2-C\|V\phi\|^2,
\end{align*}  and therefore by  \eqref{eq:5} that
  \begin{align}\label{eq:7a}
  \|r^s\tilde  Y\phi_\sigma\|^2+\|r^s\tilde X\phi_\sigma\|^2\leq 2\|r^sV\phi_\sigma\|^2+C\|r^{s-1}\phi_\sigma\|^2+C\|V\phi\|^2.
\end{align} 
\subsection{Parameter-dependent  bounds}\label{subsec:Parameter
  dependent   bounds}  We let $r=r_1$. Rather than considering $\e^{\sigma r}$ we now look at
$\e^{f_m}$ where
\begin{align}\label{eq:approx}
  f_m=f_m(r)
=r(\sigma+\gamma/(1+r/m));  m\in \N. 
\end{align} We have in mind to use this construction for $\sigma\in
[0,\sigma_\c)$ and small $\gamma>0$.
Clearly
$\e^{f_m}\to \e^{(\sigma +\gamma)r}$ for $m\to \infty$. Since
$f''_m(r)=-\tfrac {2\gamma}r\tfrac {r/m}{(1+r/m)^3}<0$ we are lacking
the convexity property if we replace  $\e^{\sigma r}\to \e^{f_m}$ in
Section 
\ref{sec:Ideas of proofs}. In fact we need to modify \eqref{eq:6}, for
example as 
\begin{align*} 
   \{X,Y\}&\geq -\tfrac {2 \gamma}r \parb{(\partial_\xi X\cdot
    \omega(x))^2+(\partial_\xi Y\cdot \omega(x))^2}\\&\geq -\gamma \tfrac
  {C}r \parb{X^2+1}.
\end{align*} Here $X$ and $Y$ are defined in terms of $f_m$ (rather
than in terms of the exponent $\sigma r$ as before), $\omega(x)=\grad r$, and $C>0$ is independent of $m$
and $\gamma$. 

Using this bound we obtain the following  modifications of
\eqref{eq:7a}, using the (slightly inconsistent) notation $\phi_m=\e^{f_m}\phi$:
  \begin{align}
    \begin{split}
  \label{eq:16a}  \|r^s\tilde
  Y\phi_m\|^2+\|r^s\tilde X\phi_m\|^2&\leq2\|r^sV\phi_m\|^2+C\|r^{s-1/2}\phi_m\|^2+C\|V\phi\|^2.
 \end{split}
\end{align}
 The constants can be chosen to be independent of  $m\in \N$
and  $\gamma\in (0,1]$.  We also note
that \eqref{eq:16a} is valid under the assumption $r^s\e^{f_m}\phi\in
L^2$ only, in particular without  assuming the strict inequality 
$\sigma<\sigma_\c$. (This will be needed in Section
 \ref{sec:start}). Somewhat refined  $\gamma$-dependent
constants can be given, however this will not be needed. 

\section{Proof of Theorem \mbox{\rm \ref{thm:12.7.2.7.9}
  \ref{item:1c}}}\label{sec:Proof of Theorem one}
Suppose \eqref{eq:12} does not have any solution, i.e.
\begin{align*}
  \inf_{\omega,\xi}|Q(\xi+\i \sigma_\c\omega)-\lambda|^2\geq 2\kappa>0.
\end{align*}
Then we fix
$\sigma<\sigma_\c$, slightly smaller, and small $\gamma>0$ with
$\sigma +\gamma>\sigma_\c$. Define  $X$ and $Y$ in terms of the
approximation $f_m$ considered in Subsection \ref{subsec:Parameter
  dependent   bounds}. We arrive at a contradiction by showing  the
uniform bound
\begin{align}
  \label{eq:17}
  \|\phi_m\|^2\leq C\|\phi\|^2;\;\phi_m=\e^{f_m}\phi.
\end{align} 

Indeed with a proper adjustment of $\sigma$ and $\gamma$ we have
$X^2+Y^2\geq \kappa$ for $|x|$ large (by a continuity argument). Then
by using \eqref{eq:8FP}, 
\eqref{eq:ellip} and 
\eqref{eq:16a}  with $s=0$  we obtain
\begin{align}\label{eq:9T}
  \begin{split}
   \|\phi_m\|^2&\leq C_1\parb{ \|(\tilde X^2+\tilde
    Y^2)^{1/2}\phi_m\|^2+\|r^{-1}\phi_m\|^2}\\
&\leq C_2\parb{\|V\phi_m\|^2+\|r^{-1/2}\phi_m\|^2}.
\end{split}
\end{align} Clearly \eqref{eq:17} follows from \eqref{eq:9T}.

\section{Proof of Theorem \mbox{ \rm \ref{precise}}}\label{sec:Bounding with deformed r}
Let $r=r_\epsilon$, and let $\omega=\omega(x)=\grad
r$. Using Section \ref{sec:energy} we shall  give the missing details in the outline of proof of Theorem
\ref{precise} given in Section \ref{sec:Ideas of
  proofs}. So suppose that the equations \eqref{eq:16eqn} do not have solutions. We look at the
state $\phi_\sigma=\e^{\sigma r}\phi$
for $\sigma< \sigma_\c$, but close, and want to prove \eqref{eq:3}.

We introduce 
\begin{align}\label{eq:15_hat}
\hat S_{\sigma, \lambda}=\{(x,\xi)\in\parb{\R^d\setminus\{0\}}\times \R^d| Q(\xi+\i \sigma\hat x)=\lambda\},
\end{align} which at infinity is close to the set 
$S_{\sigma, \lambda}$ of \eqref{eq:15}.  We localize near $\hat S_{\sigma, \lambda}$ introducing a quantization, say
$\tilde\chi_-$, of the symbols
$\chi_-=\chi_-(X^2+Y^2\leq \kappa)$, $\kappa>0$ small, see Subsection
\ref{subsec:Notation}. We have (slightly
inconsistently) shortened the
notation suppressing the dependence of $\kappa$. Note in particular
that we are going to use the decomposition
of unity \eqref{eq:8quad} for the $\kappa$-dependent partition functions.

Now we  estimate using
\eqref{eq:8FP}, \eqref{eq:ellip} and  \eqref{eq:7a}
\begin{subequations}
\begin{align}\label{eq:9}
  \begin{split}
   \|\tilde\chi_+\phi_\sigma\|^2&\leq C_1\parb{ \|(\tilde X^2+\tilde
    Y^2)^{1/2}\phi_\sigma\|^2+\|r^{-1}\phi_\sigma\|^2}\\
&\leq C_2\parb{\|V\phi_\sigma\|^2+\|r^{-1}\phi_\sigma\|^2}.
\end{split}
\end{align} Using   \eqref{eq:ellip}, \eqref{eq:7a}  and 
\eqref{eq:9} we obtain the improvement 
\begin{align}\label{eq:9bb}
  \begin{split}
   &\|\inp{p}^q\tilde\chi_+\phi_\sigma\|^2\\&\leq
   C_1\|\tilde X \tilde\chi_+\phi_\sigma\|^2+
   C_2\|\tilde\chi_+\phi_\sigma\|^2\\
&\leq
   C'_1\|\tilde X
   \phi_\sigma\|^2+C'_2\|r^{-1}\phi_\sigma\|^2+C_3\|Vr^{-1}\phi_\sigma\|^2\\&\quad
   +
   C_2\|\tilde\chi_+\phi_\sigma\|^2\\
&\leq C\parb{\|V\phi_\sigma\|^2+\|r^{-1}\phi_\sigma\|^2}.
\end{split}
\end{align}   
\end{subequations}

Redoing the considerations in Section \ref{sec:Ideas of proofs}  we see 
that 
\eqref{eq:1} is modified as
\begin{align*}
  &r\{X,Y\}+2rY^2+\{X,r\}Y\\&\geq r\{X,Y\}+rY^2-C_1r^{-1}\inp{\xi}^{2q}\\& \geq 2cr^{-\epsilon}+rY^2-C_2\inp{\xi}^{2q}\chi^2_+.
\end{align*} for small enough $\kappa$.  We quantize, combine with \eqref{eq:9bb} and conclude
\begin{align*}
  2c\|r^{-\epsilon/2}\phi_\sigma\|^2\leq -\inp{\i[V,A]}_{\phi_\sigma}-\|r^{1/2}\tilde Y\phi_\sigma\|^2+C_1\|V\phi_\sigma\|^2+C_2\|r^{-1/2}\phi_\sigma\|^2+C_3\|\phi\|^2.
\end{align*} Whence using that $V=O(|x|^{-\delta})$,
$\delta=\min(\delta_1,\delta_2)$, 
we obtain 
  the  bound  \eqref{eq:2} for $\epsilon<
  \min(2\delta_1,2\delta_2,1)$, that is 
\begin{align}
  \label{eq:2BB}
  c\|r^{-\epsilon/2}\phi_\sigma\|^2\leq -\inp{\i[V,A]}_{\phi_\sigma}-\|r^{1/2}\tilde Y\phi_\sigma\|^2+C\|\phi\|^2.
\end{align} Using the conditions on $V_1 \mand V_2$ and \eqref{eq:2BB},
assuming also $\epsilon<\min(\delta_1, 2\delta_2)$, and  again
\eqref{eq:ellip} and \eqref{eq:7a} we can show  \eqref{eq:3} as
follows: We estimate
\begin{align*}
  -\inp{\i[V_1,A]}_{\phi_\sigma}&\leq C_1\|\inp{p}^qr^{-\delta_1/2}\phi_\sigma\|^2\leq C_2\|r^{-\delta_1/2}\phi_\sigma\|^2,\\ 
-\inp{\i[V_2,A]}_{\phi_\sigma}&\leq C_1\|r^{-\delta_2}\phi_\sigma\|^2+\|r^{1/2}\tilde Y\phi_\sigma\|^2+C_2\|r^{-1/2}\phi_\sigma\|^2,
\end{align*} and insert into \eqref{eq:2BB} leading to the uniform bound
\begin{align*}
  \tfrac c2\|r^{-\epsilon/2}\phi_\sigma\|^2\leq \tilde C\|\phi\|^2.
\end{align*} Now take $\sigma\nearrow \sigma_\c$.

\section{Proof of Theorem \mbox {\rm
  \ref{thm:12.7.2.7.9} \ref{item:1cd}}}\label{sec:Bounding with rapprox
  |x|}

Let $r=r_1$, and let $\omega(x)=\grad
r$. Note that $\omega'(x)=r^{-1}P_{\perp}(\omega(x))$. We
suppose  that the conditions  \eqref{eq:10} and \eqref{eq:11} are 
not  both true at any point $(\omega,\xi)$  and want to find a 
contradiction. For that we look at the state $\phi_\sigma=\e^{\sigma r}\phi$
where $\sigma< \sigma_\c$, but close. As a first step (serving mainly as
a warm up) we are heading  toward the following
analogue of \eqref{eq:3} which we will show to be uniform in  $\sigma< \sigma_\c$:
\begin{align}
  \label{eq:3g}
  \|\phi_\sigma\|^2\leq C\|\phi\|^2.
\end{align} Let $\hat S_{\sigma, \lambda}$ be defined by \eqref{eq:15_hat}. By continuity
 and compactness we have uniformly  in $\sigma$ close to $\sigma_\c$,
\begin{align}
  \label{eq:15bg}
\begin{split}
  &\nabla_\xi X(\xi +\i \sigma\hat x)P_{\perp}(\hat x)\nabla_\xi
  X(\xi +\i \sigma\hat x)\\ &+\nabla_\eta X(\xi +\i
  \sigma\hat x)P_{\perp}(\hat x)\nabla_\eta X(\xi +\i
  \sigma\hat x)\geq k>0\\
   &\text{ for all points
  in } (x,\xi)\in \hat S_{\sigma, \lambda}.
\end{split} 
\end{align} In fact    this 
 is valid in  a neighbourhood of $\hat S_{\sigma_\c, \lambda}$, and  we can also freely
replace $\hat x$ by $\omega(x)$ in \eqref{eq:15bg} (since
only large $|x|$ matters below).
 In particular we have the following version
of \eqref{eq:1} in a neighbourhood of $S_{\sigma, \lambda}\cap\{r>R\}$ for $R>1$
sufficiently large:
\begin{align*}
  r\{X,Y\}+2rY^2+\{X,r\}Y\geq 2c+rY^2.
\end{align*} 

Using \eqref{eq:ellip}, \eqref{eq:7a}  and the procedure of Section
\ref{sec:Bounding with deformed r} we obtain, cf. \eqref{eq:9bb},
\begin{align*}
  \begin{split}
   \|\inp{p}^q\tilde\chi_+\phi_\sigma\|^2
\leq C\parb{\|V\phi_\sigma\|^2+\|r^{-1}\phi_\sigma\|^2}.
\end{split} 
\end{align*} We continue mimicking  Section
\ref{sec:Bounding with deformed r} and
come to the  following analogue of \eqref{eq:2BB} (using now  only
that $V=o(|x|^0)$)
\begin{align*}
  c\|\phi_\sigma\|^2\leq
  -\inp{\i[V,A]}_{\phi_\sigma}-\|r^{1/2}\tilde Y\phi_\sigma\|^2+C
\|\phi\|^2,
\end{align*} and from this  indeed \eqref{eq:3g}. Next we take
$\sigma\nearrow \sigma_\c$, and we
conclude that $\e^{\sigma|x|}\phi\in L^2$ with   $\sigma= \sigma_\c$.

The next step is to use Subsection \ref{subsec:Parameter
  dependent   bounds} to show that 
$\e^{\sigma|x|}\phi\in L^2$ with   $\sigma$ 
slightly bigger, which obviously    is a contradiction. We mimic 
Section \ref{sec:Proof of Theorem one}. So fix $\sigma<\sigma_\c$, slightly smaller, and small $\gamma>0$ with
$\sigma +\gamma>\sigma_\c$. Define  $X$ and $Y$ in terms of the
approximation $f_m$ considered in Subsection \ref{subsec:Parameter
  dependent   bounds} and consider the corresponding state
$\phi_m=\e^{f_m}\phi$.  Using  \eqref{eq:ellip}  and 
\eqref{eq:16a}  with $s=0$ mimicking Section
\ref{sec:Bounding with deformed r} we obtain the following version  of \eqref{eq:2BB}
\begin{align*}
  \begin{split}
  c\|\phi_m\|^2\leq
  -\inp{\i[V,A]}_{\phi_m}-\|r^{1/2}\tilde
  Y\phi_m\|^2 +C\|\phi\|^2.
\end{split}
\end{align*} Estimating the commutator as before  we arrive at  
the estimate
\begin{align*}
  \|\phi_m\|^2\leq C\|\phi\|^2.
\end{align*} We obtain a contradiction by letting $m\to \infty$.

\section{Proof of Theorem \ref{thm:start}}\label{sec:start}
If $\lambda \notin \Ran Q$ and $V(x) = o(1)$ at infinity (so that $V$ is a relatively compact perturbation of $Q(p)$) we can use the Combes-Thomas method \cite{CT} to see that $\sigma_c > 0$.  We omit the details.  

Otherwise take $\sigma=0$ and $r=r_1$ in \eqref{eq:approx}. Whence we consider $f_m:= \gamma
r(1 +  r/m)^{-1}$, $m\in \N$. Let the number $\gamma\in (0,1]$ be taken
small. Under the conditions \ref{item:2} of  Theorem \ref{thm:start} and abbreviating $\phi_m=\e^{f_m}\phi$ we shall prove the estimate 
\begin{align}\label{eq:16basicest}
  \|\phi_m\|^2\leq C\|\phi\|^2,
\end{align} for small enough $\gamma$. Taking $m\to \infty$ completes
the proof of Theorem \ref{thm:start}.

Clearly
\begin{align*}
\nabla f_m(x) = \gamma g_m(r) \omega(x) ;  g_m:= (1 + r/m)^{-2},\;\omega = \nabla r. 
\end{align*} We consider the symbols $X_m$ and $Y_m$  given by
conjugation by $\e^{f_m}$ (in agreement with the previous
sections). We are going to construct a conjugate operator. The
previous sections suggest the quantization of $rY_m$, however we
prefer to use this symbol with a different normalization: Letting 
\begin{align*}
a_m(x,\xi) = (r/\gamma g_m(r))Y_m(x,\xi) 
\end{align*} we note that $a_m\in S(r\inp{\xi}^q,g)$ uniformly in $m$
(see Subsection \ref{subsec:Notation} for terminology). Introduce partition functions
$\chi_{-,m}=\chi_-(X_m^2+Y_m^2\leq \kappa)$ and
$\chi_{+,m}=\chi_+(X_m^2+Y_m^2\geq \kappa)$ for (small)
$\kappa>0$. Indeed we can 
estimate for  small $\kappa, \gamma>0$ using the assumptions
\begin{align*}
  \{X_m,a_m\}+2a_mY_m \geq 2c-C\inp {\xi}^{2q}\chi_{+,m}^2
\end{align*} for  constants $c,C>0$ being independent of $m$. Here we
use  that 
\begin{align*}
  \{X_m,a_m\}=|\nabla Q(\xi)|^2 + \gamma  a_{0,\gamma,m}+   a_{-1,\gamma,m},
\end{align*} where $  a_{0,\gamma,m}\in S(\inp{\xi}^q,g)$ and $
a_{-1,\gamma,m}\in S(r^{-1}\inp{\xi}^q,g)$,  both uniformly in $\gamma$ and
$m$,  and we use that $2a_mY_m\geq0$. We
quantize  yielding the bound (dropping indices) 
\begin{align}\label{eq:18bnd}
  \Im \parb {A(\tilde X+\i\tilde Y)}\geq c - \Re\parb {B(\tilde X+\i\tilde Y)}-Cr^{-1/2}\inp{p}^{2q}r^{-1/2},
\end{align} where the symbol of $B$ is in $S(\inp{\xi}^q,g)$ uniformly in 
$m$, and the constants $c, C>0$ are  independent of 
$m$,  cf. Appendix \ref{sec:appA}.

Now we apply \eqref{eq:18bnd} to the state $\phi_{m,n}:=\chi_-(r\leq
n)\phi_{m}$. Taking  $n\to \infty$ by using  Lebesgue's dominated
convergence theorem leaves us with
\begin{align*}
  \inp{ \tfrac \i2 [A,V_1]}_{\phi_m}-\Im \inp{r^{-1}A\phi_m,
    rV_2\phi_m}-\Re \inp{B^*\phi_m,
    V\phi_m}\geq \inp{c-Cr^{-1/2}\inp{p}^{2q}r^{-1/2}}_{\phi_m}.
\end{align*} Next we use the Cauchy-Schwarz inequality yielding the
following bound for any (small) $\varepsilon>0$
\begin{align*}
  C_{\varepsilon}  \|\inp{p}^{q}\phi\|^2 \geq \tfrac 12 c \|\phi_m\|^2- \varepsilon
    \|\inp{p}^{q}\phi_m\|^2.
\end{align*} 
 Finally we invoke \eqref{eq:ellip} and \eqref{eq:16a} with 
 $t=s=0$  yielding \eqref{eq:16basicest}.

\section{Proof of
Theorems \ref{thm:12.7.2.7.9b} and \ref{better}}\label{sec:Absence}
Let $r=r_\epsilon$, and let $\omega=\omega(x)=\grad
r$.  Defining 
\begin{align}
  \label{eq:48}
  a=(a_1,\dots, a_d)=\e^{-\sigma r}p\e^{\sigma r}=p-\i\sigma \omega,
\end{align}  
consider 
\begin{align*}
  \e^{-\sigma r}Q(p)\e^{\sigma r}=Q(p-\i \sigma \omega) = Q(a).
\end{align*}
For the proof of Theorem \ref{thm:12.7.2.7.9b} positivity properties of $[Q(a^*),Q(a)]$ will be crucial.  We shall completely abandon the use of the pseudodifferential calculus, in
particular we shall not use the symbols $X$ and $Y$. Rather we are
going to do exact calculations of the above commutator.
Note that $p_{kl}:=[a_k,a_l^*]=2\sigma\partial _l\omega_k$,
  and thus $P:=(p_{kl})=2\sigma\omega'\geq c\sigma r^{-1-\epsilon}>0$.  

 From  \eqref{eq:16pois}  (or for other reasons) one might
guess  that to  ``leading
order'' 
\begin{align}\label{eq:posFirst}
 [Q(a),Q(a^*)]\approx  2\sigma Q'(a)\omega'Q'(a^*)^T\geq 0.
\end{align}  However this analogy with
the previous sections turns out to be somewhat misleading, or at least
insufficient, for the
problem at hand. From the viewpoint
of the calculus of pseudodifferential operators there is a competition  in  a symbol between the
behaviour as the phase-space variables $\to \infty$ and
the behaviour when $\sigma\to \infty$ and it is  natural to use a
suitable parameter-dependent calculus, see Subsubsection \ref{subsubsection:Limitations of method}
 for a possible candidate. But even with such  a device this competition appears  too  subtle
to be resolved  (at least for us)  and consequently  we are going  to do
{\it exact}  calculations on  the commutator $[Q(a),Q(a^*)]$. Those  belong to the functional calculus rather than the pseudodifferential calculus. We shall derive
a combinatorial   formula in which the ``total amount'' of positivity in
$[Q(a),Q(a^*)]$ is explicitly exposed. The   expression in 
\eqref{eq:posFirst} turns out  to be only one out of in general many positive expressions
``hidden'' in the commutator. The other   expressions would  in the
framework of the previous sections be considered as harmless lower
order 
terms. In the present context they also  have
``lower order''. Nevertheless  as the reader will see we  will need all  of them.

\subsection{Calculation of a commutator}\label{subsec:Calculation of a commutator}

For each $m \ge 1$, let $J_m = (j_1,...,j_m)$  and $K_m =
(k_1,...,k_m)$ be $m$-tuples of numbers in $\{1,\dots, d\}$.  In this section we will prove the following formula:

\begin{align}\label{commutator formula}
&[Q(a),Q(a^*)] = F + E;  \\
&F=\sum_{m \ge1, J_m,
  K_m}(m!)^{-1}(\partial_{j_1}\cdots \partial_{j_m}Q)(a^*)\parbb {\prod_{l= 1}^m p_{j_lk_l}}(\partial_{k_1}\cdots \partial_{k_m}Q)(a), \notag \\
&E =  \sum_{m \ge1, J_m, K_m, \bar \alpha + \bar \beta \ne
  0}c^{\bar \alpha,\bar \beta}_{J_m,K_m}(\partial^{\alpha}\partial_{j_1}\cdots \partial_{j_m}Q)(a^*)P^{\bar
  \alpha,\bar \beta}_{J_m,K_m}(\partial^{\beta}\partial_{k_1}\cdots \partial_{k_m}Q)(a), \notag 
\end{align}
where the summation parameters $\bar \alpha = (\alpha_1,...,\alpha_m)$ and $\bar \beta
=(\beta_1,...,\beta_m)$ for the sum  $E$ denote  arbitrary $m$-tuples of multi-indices,
$\alpha=\Sigma_{l=1}^m \alpha_l$,   $\beta= \Sigma_{l=1}^m \beta_l$,
and 
$P^{\bar
  \alpha,\bar \beta}_{J_m,K_m}=
(\partial^{\alpha_1+\beta_1}p_{j_1k_1})\cdots( \partial^{\alpha_m+\beta_m}p_{j_mk_m})$.
\vspace{2mm}
We will not need an explicit expression for the combinatorial coefficient $c^{\bar
  \alpha,\bar \beta}_{J_m,K_m}$ because for $\sigma$ large, the term $E$ will be seen to be negligible.

To prove (\ref{commutator formula}) let $\adop_b(c) = [b,c], \rop_b(c) =
cb, \lop_b(c) = bc$.  Note that for commuting $b_1 \mand b_2$ all the
operators $\ad_{b_1}, \ad_{b_2}, \rop_{b_1}, \rop_{b_2}, \lop_{b_1},
\lop_{b_2}$ commute.  For $f$ a polynomial in $d$ variables and
$b_1,b_2,...,b_d$ commuting operators, we note the Taylor type formulas
\begin{subequations}
\begin{align}
  \begin{split}
\label{commutationformula}
[f(b),c] &= (f(\ad_b +\rop_b) - f(\rop_b)) c\\&= \Sigma_{\alpha \ne 0}(\alpha!)^{-1}
\ad_b^{\alpha}(c)\partial^{\alpha}f(b),
  \end{split} 
\end{align}
\begin{align}
\begin{split}
\label{commutationformulaB}
[f(b),c] &=  (f( \lop_b) - f(-\ad_b + \lop_b)) c\\ & = \Sigma_{\alpha \ne
  0}\tfrac{(-1)^{|\alpha| +
  1}}{\alpha!} \partial^{\alpha}f(b)\ad_b^{\alpha}(c).
\end{split}  
\end{align} 
\end{subequations}

We will also need the Leibniz type formula

\begin{align}\label{eq:Leib}
\ad_b^{\alpha}(cd) = \sum_{\gamma}\binom{\alpha}{\gamma}\ad_b^{\alpha - \gamma}(c) \ad_b^{\gamma}(d).
\end{align}

By \eqref{commutationformula} and \eqref{commutationformulaB} 
\begin{align}\label{eq:7}
& [Q(a),Q(a^*)] =  \Sigma_{\alpha \ne 0} (\alpha!)^{-1}\parb{\ad_a^{\alpha}Q(a^*)}Q^{(\alpha)}(a),\\
 &[a_j,Q(a^*)] = \Sigma_{\alpha \ne 0} \tfrac{(-1)^{\alpha}}{\alpha!}Q^{(\alpha)}(a^*) \ad_{a^*}^{\alpha}a_j.\label{eq:16}
\end{align} Here and below we denote $g^{(\alpha)} =\partial^{\alpha} g 
$, $(-1)^\alpha=(-1)^{|\alpha|}$ and $\ad_b^\alpha c=\ad_b^\alpha (c)$.

We will use  the summation rule
\begin{equation} \label{zeta}
\Sigma_{\alpha \ne 0}f(\alpha) = \Sigma_{\beta , k} \,\zeta(\beta + e_k) f(\beta + e_k),
\end{equation}
where for $\alpha \ne 0$,  $\zeta(\alpha)^{-1}$ = the number of $j$'s
with $\alpha_j >0$, and  $\{e_1,...,e_d\}$ is the standard basis for $\mathbb{R}^d$.
 Note that 
the number of pairs $(\beta, k)$ such that  $\alpha = \beta + e_k$ is the number of $j$'s such that $\alpha_j > 0$.

Introducing also the notation $d(\alpha)= (\alpha!)^{-1} \zeta(\alpha)$ it follows that 
\begin{align}\label{eq:18}
  \begin{split}
   &\ad_a^{\alpha+e_j}Q(a^*) \\& = \sum_{\beta,\gamma,\mu,k, \gamma+
     \mu = \alpha}\tfrac{\ad_a^{\mu}Q^{(\beta +e_k)}(a^*)}{\mu!}
   \bigg\{(-1)^{\beta} d(\beta+e_k)\tfrac{\alpha!}{\gamma!}\ad_a^{
     \beta+ \gamma }p_{jk}\bigg\}.
  \end{split}
\end{align} Here we used \eqref{eq:Leib}, \eqref{eq:16},
\eqref{zeta}, the computation $\ad_{a^*_k}a_j= -p_{jk}$ and whence
that
\begin{align*}
  \ad^{\gamma}_{a}\ad^{\beta+e_k}_{a^*}a_j=-\ad^{\gamma+\beta}_{a}p_{jk}=-\ad^{\beta+\gamma}_{a}p_{jk}.
\end{align*}

Thus for  $f(\mu)=Q^{(\mu)}(a)$ (or any other   operator-valued
function $f$)  and with
\begin{align*}
  \lambda(\beta,\gamma,\mu,j,k) := \tfrac{(\gamma + \mu)!}{\gamma!}d(\beta + e_k) d(\gamma + \mu + e_j),
\end{align*}

\begin{align}\label{eq:19}    
  \begin{split}
&\sum_{\mu \ne 0}\tfrac{\ad_a^{\mu}Q(a^*)}{\mu!}f(\mu) \\
&=\sum_{\beta,\gamma,j,k,\mu}\tfrac{\ad_a^{\mu}Q^{(\beta + e_k)}(a^*)}{\mu!}\bigg\{(-1)^{\beta} \lambda(\beta,\gamma,\mu,j,k)\ad_a^{\beta+\gamma}(p_{jk})f(\gamma + \mu + e_j)\bigg\} \\
&= \sum_{\beta,\gamma,j,k}Q^{(\beta + e_k)}(a^*)\bigg \{(-1)^{\beta}d(\beta+e_k)d(\gamma+e_j)\ad_a^{\beta+\gamma}(p_{jk})f(\gamma + e_j)\bigg\} \\
&+\sum_{\beta,\gamma,j,k}\sum_{\mu \ne 0}\tfrac{\ad_a^{\mu}Q^{(\beta +
    e_k)}(a^*)}{\mu!}\bigg \{(-1)^{\beta} \lambda(\beta,\gamma,\mu,j,k)\ad_a^{\beta+\gamma}(p_{jk})f(\gamma  + e_j+ \mu)\bigg\}.
  \end{split}
\end{align}
 Here we have used \eqref{zeta} and \eqref{eq:18}.

From \eqref{eq:19}  we can proceed inductively.
Introduce the notation
\begin{align*}
 B_l &= (\beta_1,,,\beta_l),\quad  \Gamma_l = (\gamma_1,,,\gamma_l),\\
 J_l &= (j_1,,,j_l), \quad  K_l = (k_1,,,k_l),\\
p_{jk\beta \gamma} &= \partial^{\beta + \gamma} p_{jk},\quad   D =
-\i \partial.
\end{align*} Here the components of $ B_l $ and $\Gamma_l $ are
multi-indices, while the components of $J_l$ and $K_l$ are numbers in $\{1,\dots, d\}$.

Repeatedly using \eqref{eq:19}  we obtain
\begin{align}   \label{general}
[Q(a),Q(a^*)]&\\ = \sum_{\beta,\gamma,j,k} &d(\beta+e_k) d(\gamma+e_j) (D^{(\beta + e_k)}Q)(a)^*p_{jk\beta \gamma}(D^{(\gamma + e_j)}Q)(a) \notag \\
 +\sum _{m\geq 2, B_m,\Gamma_m,J_m,K_m} &C^{B_m,\Gamma_m}_{J_m, K_m}(D^{\Sigma_{l = 1}^m(\beta_l + e_{k_l})}Q)(a)^*\parbb{\prod_{l= 1}^m p_{j_lk_l\beta_l \gamma_l}}(D^{\Sigma_{l = 1}^m(\gamma_l + e_{j_l})}Q)(a). \notag 
\end{align}
Here 
\begin{align*}
&C^{B_m,\Gamma_m}_{J_m,K_m} \\ & =\parbb{\prod_{l=1}^m\frac{ d(\beta_l + e_{k_l})}{\gamma_l!}} \prod_{l=1}^m\parbb{\parb{\gamma_l + \Sigma_{k=l+1}^m(\gamma_k + e_{j_k})}! d\parb{\Sigma_{k=l}^m(\gamma_k + e_{j_k})}},
\end{align*}
where the empty sum, $\Sigma_{k=m+1}^m$,  is by convention $=0$. 
Note that if $\beta_j = \gamma_j =0$ for all  $j$ then  we have 

\begin{align*}
C^{B_m,\Gamma_m}_{J_m, K_m}= C_{J_m}: = \parb{(\Sigma_{k = 1}^m
  e_{j_k})! }^{-1}\prod_{l=1}^m\zeta(\Sigma_{k=l}^m e_{j_k}).
\end{align*}

To compute the first term in (\ref{commutator formula}) note that in
(\ref{general}) we can replace $C_{J_m}$ by its average over
permutations.  To compute this we  use \eqref{zeta} and compute as a formal sum
\begin{align*}
\Sigma_{\alpha} f(\alpha) &=    f(0) + \Sigma_{\alpha, j} f(\alpha+ e_j) \zeta(\alpha + e_j)\\
& = f(0) +  \Sigma_j f(e_j) \zeta( e_j) + \sum_{\alpha, j_1, j_2} f(\alpha+ e_{j_1} +e_{j_2}) \zeta(\alpha + e_{j_1} + e_{j_2})\zeta(\alpha + e_{j_2}) \\
& = f(0)+ \sum_{m\geq 1,J_m} \parbb{f(\Sigma_{l=1}^me_{j_l})\prod_{k=1}^m\zeta(\Sigma_{l=k}^m e_{j_l})}.
\end{align*}

We set $f(\alpha) = x^{\alpha}/\alpha!$ and obtain
\begin{align*}
 e^{(x_1 + x_2 + ... x_d)} = &1 + \sum_{m\geq 1,J_m} \frac{x^{(e_{j_1} + e_{j_2} + ... +e_{j_m})}}{(e_{j_1} + e_{j_2} + ... +e_{j_m})!}\prod_{k=1}^m\zeta(\Sigma_{l=k}^m e_{j_l}) \\
 & = 1 +  \sum_{m\geq 1,J_m} C_{J_m} x^{(e_{j_1} + e_{j_2} + ... +e_{j_m})}.
\end{align*}
Differentiating and then setting $x=0$ gives
\begin{equation*}
1 = \frac{\partial^m e^{(x_1 + x_2 + ... x_d)} }{\partial x_{k_1}...\partial x_{k_m}}|_{x=0} = \sum_{\sigma \in \mathcal S_m} C_{k_{\sigma(1)}, ..., k_{\sigma(m)}}.
\end{equation*}

It follows that 
\begin{equation*} 
F = \sum_{m \ge1} (m!)^{-1}\sum_{J_m, K_m}(D^{\Sigma_{l = 1}^me_{k_l}}Q)(a)^*\parbb{\prod_{l= 1}^m p_{j_lk_l}}(D^{\Sigma_{l = 1}^m e_{j_l}}Q)(a), 
\end{equation*}
which  is (\ref{commutator formula}) with $E$ given with reference to (\ref{general}).

\subsection{Proof of Theorem \ref{thm:12.7.2.7.9b}}

The positivity of $F$ comes from the inequality 
\begin{align}\label{eq:22}
P \otimes \cdots \otimes P \ge c\sigma^mr^{-(1+\epsilon)m}I
\end{align}
on $\otimes_{j=1}^m l^2(\{1,...,d\})$ for some $c>0$.  This gives 
\begin{align}\label{simppositivity}
CF \ge \sum _{\alpha \ne 0}\sigma^{|\alpha|}\partial^{\alpha}Q(a^*)r^{-(1+\epsilon)|\alpha|}\partial^{\alpha}Q(a).
\end{align}

A typical term in $E$ can be written
\begin{align*}
T = \sigma^m \partial^{\alpha + \mu}Q(a^*)R_{\alpha,\beta,\mu,\nu} \partial^{\beta + \nu}Q(a),
\end{align*}
 where $|\alpha| = |\beta| = m$,  $\mu + \nu \ne 0$, and
 $|R_{\alpha,\beta,\mu,\nu}| \le C r^{-m - |\mu| - |\nu|}$.  It follows that 
 \begin{align*}
\pm \Re(T) \le &C\parb{\sigma^m \lambda \partial^{\alpha + \mu}Q(a^*)r^{-m - |\mu| - |\nu|}\partial^{\alpha + \mu}Q(a) + \\ \notag
& \sigma^m \lambda^{-1} \partial^{\beta + \nu}Q(a^*)r^{-m - |\mu| - |\nu|}\partial^{\beta + \nu}Q(a)}.
\end{align*}

If one of the multi-indices $\mu$ or $\nu$ is zero, say $\nu = 0$, then take $\lambda = \sigma^{1/2}$. Otherwise take  $\lambda = 1$.  If we take $\epsilon < 1/q $ clearly this term is negligible compared to $F$ for large $\sigma$.  It follows that $E$ is negligible compared to $F$ for large $\sigma$.

Let us now prove Theorem \ref{thm:12.7.2.7.9b}.  We have 
$(Q(a^*) + V_1 - \lambda)\phi_{\sigma} = - V_2\phi_{\sigma}$ which gives
\begin{align}\label{eq:20}
  \begin{split}
  (\phi_{\sigma},[Q(a),Q(a^*)]\phi_{\sigma}) &+ 2\Re(\phi_{\sigma},[Q(a), V_1]\phi_{\sigma}) + ||(Q(a) + V_1 - \lambda)\phi_{\sigma}||^2\\&= ||V_2\phi_{\sigma}||^2.  
  \end{split}
  \end{align}

We use (\ref{commutationformula}) to compute $[Q(a), V_1]$:
\begin{align}
[Q(a), V_1] &= \Sigma_{\alpha \ne 0} (\alpha!)^{-1}(\ad_a^{\alpha}V_1) \partial^{\alpha}Q(a)\notag \\
& =  \Sigma_{\alpha \ne 0} (\alpha!)^{-1}((-\i\partial)^{\alpha}V_1) \partial^{\alpha}Q(a).\label{QV_1formula}
\end{align}

With \cs we can bound 
\begin{align*}
-2\Re(\phi_{\sigma},[Q(a), V_1]\phi_{\sigma}) \le C \Sigma_{1 \le |\alpha| \le q}\parb{r^{(1+\epsilon)|\alpha|}(\partial^{\alpha}V_1)^2 + \partial^{\alpha}Q(a^*)r^{-(1+\epsilon)|\alpha|}\partial^{\alpha}Q(a)}.
\end{align*}
The fact that $\phi_{\sigma} = 0$ follows by taking $\epsilon$ small and then using the formula (\ref{simppositivity}).   In addition the terms with $m = q$ in  (\ref{simppositivity}) are used to bound  $|V_2|^2$ and the terms $r^{(1+\epsilon)|\alpha|}(\partial^{\alpha}V_1)^2$.
This completes the proof of Theorem \ref{thm:12.7.2.7.9b}.

In the next subsection we display additional positivity of $[Q(a^*),Q(a)]$ by giving a symmetrized estimate.  This will be important in proving Theorem \ref{better}.
\subsection{Symmetrized estimate}

Abbreviating $r^{-(1+\epsilon)|\alpha|} = R_{\alpha}, \partial^{\alpha}Q = Q^{(\alpha)}$ we will show here that for some $C >0$ and all large $\sigma$ \begin{align}\label{sympositivity}
C[Q(a), Q(a^*)] \ge  \Sigma_{\alpha \ne 0}\,\sigma^{|\alpha|}\parb{Q^{(\alpha)}(a^*)R_{\alpha}Q^{(\alpha)}(a)+
Q^{(\alpha)}(a)R_{\alpha}Q^{(\alpha)}(a^*)}.
\end{align}

For any $m\geq 1 $ we abbreviate $J = (j_1,...,j_m),
K=(k_i,...k_m),\partial_J = \partial_{j_1} \cdots\partial_{j_m},
\newline P_{JK} = p_{j_1k_1}\cdots p_{j_mk_m}$.  We introduce
\begin{align*}
  F_\lef&=\sum_{m,J,K} (m!)^{-1}\partial_J Q(a^*)P_{JK}\partial_K
  Q(a),\\
F_\righ&=\sum_{m,J,K} (m!)^{-1} \partial_KQ(a)P_{KJ}\partial_J Q(a^*).
\end{align*} Clearly $F=F_{\lef}$, and 
by \eqref{eq:22}  and \eqref{simppositivity}
\begin{subequations}
\begin{align}\label{eq:24}
  F_\lef&\geq c\Sigma_{\alpha \ne
    0}\, \sigma^{|\alpha|}Q^{(\alpha)}(a^*)R_{\alpha}Q^{(\alpha)}(a),
\\F_\righ&\geq c\Sigma_{\alpha \ne 0}\,\sigma^{|\alpha|}Q^{(\alpha)}(a)R_{\alpha}Q^{(\alpha)}(a^*).\label{eq:25}
\end{align}  
\end{subequations}
Now for any term in $F_\lef$ we decompose using the symmetry $P_{JK}=
P_{KJ}$
\begin{align*}
  &\partial_J Q(a^*)P_{JK}\partial_K Q(a)-[\partial_J Q(a^*),P_{JK}]\partial_K Q(a)+P_{JK}[\partial_K Q(a),\partial_J Q(a^*)]\\ 
&  =\partial_K Q(a)P_{KJ}\partial_J Q(a^*)   - [\partial_K Q(a),P_{JK}] \partial_J Q(a^*),
\end{align*} and write this formula as 
\begin{align*}
   T^\lef_{m,J,K}=T^\righ_{m,J,K}.  
  \end{align*}
It suffices to show that for all large $\sigma$ and all small
$\epsilon$
\begin{subequations}
\begin{align}
  \label{eq:21}
  \sum_{m,J,K} (m!)^{-1}\Re\parb{T^\lef_{m,J,K}}&\leq C F_\lef,\\
\sum_{m,J,K} (m!)^{-1}\Re \parb{T^\righ_{m,J,K}}&\geq \tfrac 12 F_\righ.\label{eq:23}
\end{align} 
\end{subequations} The proof of \eqref{eq:21} and \eqref{eq:23} is
given by using \eqref{eq:24} and \eqref{eq:25}, respectively. Let us
here derive \eqref{eq:21} only.

For the middle term we 
calculate using  \eqref{commutationformulaB}
\begin{align}
 -[\partial_J Q(a^*),P_{JK}] =  \Sigma_{\alpha \ne 0}\tfrac
 {(-1)^{\alpha}} {\alpha!}\partial^{\alpha}\partial_JQ(a^*) \ad_{a^*}^{\alpha}P_{JK},
\end{align}
which gives

\begin{align}\label{error_1}
  \begin{split}
-\sum_{m,J,K} (m!)^{-1}&\Re\parb{[\partial_J Q(a^*),P_{JK}]\partial_K Q(a)}\\ \le C\sigma^{m+1/2}&\sum_{m,J,K,\alpha \ne 0}\partial^{\alpha}\partial_JQ(a^*)r^{-(m +3|\alpha|/2)}\partial^{\alpha}\partial_J Q(a) 
\\
& +C\sigma^{m-1/2}\sum_{m,J,K,\alpha \ne 0}\partial_KQ(a^*)r^{-(m
  +|\alpha|/2)}\partial_KQ(a)\\
&\leq \tfrac 14  F_\lef+\tfrac 14  F_\lef.  
\end{split}
\end{align}
In the last step we used \eqref{eq:24} and needed large $\sigma$
large and 
$\epsilon$  small.
 It remains to consider the last term.  Using (\ref{commutator formula}) we have 
\begin{align*}
 &\sum_{m,J,K} (m!)^{-1}\Re\parb{P_{JK}[\partial_J Q(a^*),\partial_K
   Q(a)]}\\ &=  \sum_{m,J,K} (m!)^{-1}
\sum_{ l\ge 1,J'_l,K'_l}(l!)^{-1}\Re\parb{P_{JK}\partial_{J'_l} \partial_J Q(a^*)P_{J'_l,K'_l}\partial_{J'_l}\partial_JQ(a)} + E',
\end{align*}
where $E'$ comes from $E$ in (\ref{commutator formula}).  Once
$P_{JK}$ is commuted past $\partial_{J'_l} \partial_J Q(a^*)$ we get
for the fixed $m$ and $l$ portion of the summation in  the  first term on the right side
\begin{align*}
  C_{m,l}\Sigma_{J,K,J'_l,K'_l }\partial_{J'_l} \partial_J Q(a^*)P_{J,K}P_{J'_l,K'_l}\partial_{J'_l}\partial_JQ(a) =  C_{m,l}\Sigma_{L,M} \partial_{L} Q(a^*)P_{L,M}\partial_MQ(a),
\end{align*}
 where $L = (j_1,...,j_{l+m}) \mand M= (k_1,...k_{l+m})$ and the
 $j_i's \mand k_i's$ are summed over. The contribution from the resulting expression  can be
 estimated  by a  multiple of $F_\lef$.  The contribution from the
 commutator and the term $E'$ are  handled as
 in (\ref{error_1}).   Thus combining with \eqref{error_1} we obtain
 \eqref{eq:21}.

\subsection{Proof of Theorem \ref{better}} \label{subsection:Improvement for}

Let $f_m = r^{-(1+\epsilon)m/2}$.  We first note that using the same
ideas (commutation and \cS) as in the last two subsections we can equivalently write (for large $\sigma$, small enough $\epsilon$, and some $C>0$)

\begin{align} \label{Sdef}
&C[Q(a),Q(a^*)] \ge S;\\
&S = \sum_{m=1} \sigma^m\sum_{l_1,\dots,l_m\leq
  d}\parbb{|\parb{\partial_{{l_1}}\cdots \partial_{{l_m}}Q(a^*)}f_m(r)|^2+|\parb{\partial_{l_1}\cdots \partial_{l_m}Q(a)}f_m(r)|^2}. \notag 
\end{align}

We need a more efficient extraction of positivity from (\ref{Sdef}) than is immediately evident from this lower bound.  We will also need a formula for $[Q(a), V_1]$ different from (\ref{QV_1formula}) which was used in the proof of Theorem \ref{thm:12.7.2.7.9b}.

Thus for $Q(\xi) = \xi^2$, we have $\partial_jQ(\xi) = 2\xi_j$ so that 
\begin{align*}
  S &= \Sigma_j4\sigma f_1(a^*_ja_j + a_ja^*_j)f_1 + 8d\sigma^2 f_2^2\\ &= 8\sigma f_1(p^2 +\sigma^2 \omega^2)f_1 + 8d\sigma^2 f_2^2.
\end{align*}
 Thus for large $\sigma$ and some $C>0$
\begin{align}\label{square}
CS \ge \sigma^2f_1^2. 
 \end{align}
 Moving on to the commutator of $Q(a)$ with $V_1$ we have 
\begin{align}\label{eq:8}
\Re[Q(a),V_1] =\Re \Sigma_j (a_j[a_j, V_1] + [a_j,V_1]a_j) = -2\sigma \omega \cdot \nabla V_1.
\end{align}
Again applying \eqref{eq:20}, the result for $Q(\xi) =
\xi^2$ follows from (\ref{square}) and \eqref{eq:8}.  

We now consider $Q(\xi) = (\xi^2)^2$. 
Note that $\partial_j Q(\xi) =  4\xi^2\xi_j$ and $\partial_j^2 Q(\xi)
= 8(\xi^2_j+\xi^2 /2)$.  Whence for any operator $P\geq 0$
\begin{align*}
  64^{-1}\Sigma_j (\partial_j^2Q(a)P\partial_j^2Q(a^*)
+ \partial_j^2Q(a^*)P\partial_j^2Q(a))\geq \Sigma_j \parb{ a_j^2P (a^*_j)^2+(a^*_j)^2Pa_j^2}.
\end{align*}
We will also use  the following identity for an operator $b$
\begin{align*}
&b^2(b^*)^2 + (b^*)^2b^2=\\
&\parbb {b(bb^* +b^*b)b^* + b^*(bb^* + b^*b)b + \ad_b^2(b^*)b^* +
  \ad_{b^*}^2(b)b +[b,b^*]^2}/2.
\end{align*}

Applied to $P=I$ and $b=a_j$ it follows that 
\begin{align*}
&64^{-1}\Sigma_j (\partial_j^2Q(a)\partial_j^2Q(a^*)
+ \partial_j^2Q(a^*)\partial_j^2Q(a)) \\& \geq  
\Sigma_j\parb {a_j(p_j^2 +\sigma^2\omega_j^2)a_j^* + a_j^*(p_j^2
  +\sigma^2\omega_j^2)a_j +\i\sigma\partial_j^2\omega_j(a_j - a_j^*) +
  2\sigma^2(\partial_j\omega_j)^2} \notag\\
&\ge 2\Sigma_j\parb { \sigma^4\omega_j^4 +  \sigma^2 \partial_j(\omega_j\partial_j\omega_j)}.
\end{align*}

If we add a suitable multiple of the $m=4$ term of (\ref{Sdef}) we obtain
\begin{align}\label{4thpower}
CS \ge \sigma^4f_2^2.
\end{align}

We now go on to compute $[Q(a),V_1]$.
We have 
\begin{align*}
&[(a^2)^2,V_1] \\&= 2\Sigma_j\parb {[a_j,V_1]a_ja^2+a^2a_j[a_j,V_1]}+\Sigma_{i,j}\parb {(\ad^2_{a_j}V_1) a^2_i -a^2_i (\ad^2_{a_j}V_1)} \\
&=-\tfrac \i2\Sigma_j  \parb {\partial_jV_1\partial_jQ(a)
  + \partial_jQ(a) \partial_jV_1}+ \Sigma_{i}\parb {a^2_i(\Delta V_1) - (\Delta V_1)a^2_i}.
\end{align*}
We bound
\begin{align*}
&-\Im \Sigma_j  \parb {\partial_jV_1\partial_jQ(a) + \partial_jQ(a) \partial_jV_1}   \notag \\
&\le \Sigma_j  \parb {\partial_jQ(a^*)f_1^2\partial_jQ(a)  + \partial_jQ(a)f_1^2\partial_jQ(a^*)+(f_1^{-1}\partial_jV_1)^2} \\&\le C\sigma^{-1}S, 
\end{align*}
where we have taken $\epsilon$ small, $\sigma$ large and used
(\ref{4thpower}). Similarly
\begin{align*}
&-2\Re \Sigma_{i} \parb {a_i^2(\Delta V_1)
  + (\Delta V_1)a_i^2}\\& \le \Sigma_i \parb{(a_i^*)^2 f_2^2a_i^2 +
  a_i^2 f_2^2(a_i^*)^2} + 2d(f_2^{-1}\Delta V_1)^2\\& \leq 64^{-1}\Sigma_i\parb {\partial_i^2Q(a^*)f_2^2\partial_i^2Q(a) +\partial_i^2Q(a)f_2^2\partial_i^2Q(a^*) } + 2d(f_2^{-1}\Delta V_1)^2\\&\le C \sigma^{-2} S.
\end{align*}

Putting these estimates together gives Theorem \ref{better} for $Q(\xi) = (\xi^2)^2$.

\subsubsection{Limits  of the method, 
  examples} \label{subsubsection:Limitations of method}
We continue the discussion of the examples treated above. Introduce
$\inp{\xi}_\sigma=(\xi^2+\sigma^2\omega^2)^{1/2}$ and
$\inp{p}_\sigma=(p^2+\sigma^2\omega^2)^{1/2}$. 
For $Q(\xi) = \xi^2$ we found
 the lower bound
\begin{align}
  \label{eq:26}
  C S\geq \sigma f_1\inp{p}_\sigma^2f_1+ \sigma^2 f^2_2.
\end{align} For $Q(\xi) = (\xi^2)^2$ we have  the lower bound
\begin{subequations}
\begin{align}
  \label{eq:27}
  C S\geq \sigma^2 f_2\inp{p}_\sigma^4f_2+ \sigma^4 f^2_4,
\end{align} which is an extension of \eqref{4thpower} and follows
from its proof.

Letting 
\begin{align*}
  g=r^{-2}\d x^2+\parb{\xi^2+\sigma^2}^{-1}\d \xi^2;\;\sigma>1,
\end{align*} the symbol of $S$ for $Q(\xi) = \xi^2$ is  in the uniform
parameter-dependent class (cf. \cite[Chapt. XVIII]{Ho})
\begin{align*}
  S_\unif(\sigma r^{-1}\parb{\xi^2+\sigma^2},g),
\end{align*} and  for $Q(\xi) = (\xi^2)^2$ in 
\begin{align*}
  S_\unif(\sigma r^{-1}\parb{\xi^2+\sigma^2}^3,g).
\end{align*} Comparing with \eqref{eq:26} and \eqref{eq:27} we see
that essentially we  got an {\it elliptic} estimate in the case of
$Q(\xi) = \xi^2$ (there is  a loss of the small power $r^\epsilon$ and a slight
modification  at the critical point $x=0$),  while  we only  got a {\it subelliptic} estimate in
the case of $Q(\xi) = (\xi^2)^2$. In the latter case possibly ``ellipticity'' would
be  the stronger bound
\begin{align}
  \label{eq:27b}
  C S\geq \sigma f_1\inp{p}_\sigma^6f_1+ \sigma^4 f^2_4.
\end{align} Somehow we lost a factor of
$r^{1+\epsilon}\sigma^{-1}\inp{p}^2_\sigma\approx r\sigma^{-1}\inp{p}^2_\sigma$, and it is
natural to ask if \eqref{eq:27} can be improved perhaps up to the
 optimal type  bound \eqref{eq:27b}?  We will show  this is not
 possible, in   particular we will show that our bound \eqref{eq:27} can be
 considered ``optimal''. Note that  the bound \eqref{eq:27b}
 would lead to 
\begin{align}
  \label{eq:27bc}
  C S\geq \sigma^{7} |\omega|^6r^{-1-\epsilon},
\end{align} while  \eqref{eq:27} implies
\begin{align}
  \label{eq:27bcc}
  C S\geq \sigma^{6} |\omega|^4r^{-2-2\epsilon}.
\end{align}
\end{subequations}
\begin{lemma}\label{lemma:limit-meth-exampl} Consider $Q(\xi) =
  (\xi^2)^2$.  Both of the following assertions 
 are false. 

  \begin{subequations}
For some $s\in \R$ and  $t\geq 0$ there exists $\epsilon_0\in
  (0,1)$ such that for all $\epsilon\in (0,\epsilon_0]$ there are
  constants $C_\epsilon,\sigma_\epsilon>1$:
  \begin{align}
    \label{eq:31a}
    C_\epsilon[Q(a),Q(a^*)]\geq \sigma^{1+s} |\omega|^tr^{-2+\epsilon}\mforall
    \sigma\geq \sigma_\epsilon.
  \end{align} 

For some $s>5$  and $t\geq 0$ there exists $\epsilon_0\in
  (0,1)$ such that for all $\epsilon\in (0,\epsilon_0]$ there are
  constants $C_\epsilon,\sigma_\epsilon>1$:
  \begin{align}
    \label{eq:31b}
    C_\epsilon[Q(a),Q(a^*)]\geq \sigma^{1+s} |\omega|^tr^{-2}\mforall
    \sigma\geq \sigma_\epsilon.
  \end{align}  
  \end{subequations}
\end{lemma}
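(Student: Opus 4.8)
The plan is to refute both \eqref{eq:31a} and \eqref{eq:31b} by exhibiting explicit test states $\psi$ (or rather a one-parameter family $\psi_\sigma$) for which the quadratic form $\inp{\psi_\sigma,[Q(a),Q(a^*)]\psi_\sigma}$ can be computed or estimated from above sharply, and comparing with the conjectured lower bounds. First I would make the commutator concrete: for $Q(\xi)=(\xi^2)^2$ we have $Q(a)=\sum_{i,j}a_i a_j a_i^* a_j^*$ up to reordering, so $[Q(a),Q(a^*)]$ is a differential operator whose coefficients are polynomials in $\sigma$, in $\omega=\nabla r$ and its derivatives (which decay like $r^{-1-\epsilon}$, $r^{-2-\epsilon}$, etc.), and in $p$. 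Collecting powers of $\sigma$, the top term is the multiplication operator $c\,\sigma^4|\omega|^4\cdot(\text{stuff})$ coming from $[Q(-\i\sigma\omega),Q(\i\sigma\omega)]$-type contributions; a careful bookkeeping — exactly the combinatorial formula \eqref{general}/\eqref{commutator formula} specialized to $(\xi^2)^2$ — shows the purely-multiplicative part of $[Q(a),Q(a^*)]$ is $O(\sigma^6|\omega|^4 r^{-2-2\epsilon})+O(\sigma^5\cdots)$, matching \eqref{eq:27bcc}, while any term genuinely of order $\sigma^7$ must carry a factor of $p$ (momentum), hence vanishes on a suitably chosen state or is controlled by the lower-order terms on it.

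The key step is the choice of test state. I would take $\psi_\sigma$ to be (a normalized version of) $e^{-\sigma r}\chi(x)\,\theta_\sigma(x)$ where $\chi$ localizes in an annular region $R\le |x|\le 2R$ with $R$ eventually sent to infinity, and $\theta_\sigma$ is a slowly varying modulation chosen so that $a_j\psi_\sigma=(p_j-\i\sigma\omega_j)\psi_\sigma$ is \emph{small} in the relevant directions — ideally so that $a\psi_\sigma\approx 0$ to leading order in $\sigma$, i.e. $\psi_\sigma$ is an approximate null vector of $Q(a)$. For such a state the ``elliptic'' contributions $\sigma f_1\inp{p}_\sigma^6 f_1$ in the hoped-for bound \eqref{eq:27b}/\eqref{eq:31b} would be forced to be tiny, yet I would show the true commutator on $\psi_\sigma$ is only of size $\sigma^6|\omega|^4 r^{-2-2\epsilon}\sim \sigma^6 R^{-2-2\epsilon}$, not $\sigma^7 R^{-2}$; letting $\sigma\to\infty$ with $R=R(\sigma)$ tuned appropriately produces a contradiction with \eqref{eq:31b} for any $s>5$. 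For \eqref{eq:31a}, the same family with a different scaling of $R$ against $\sigma$ shows that with the decay exponent $r^{-2+\epsilon}$ (better than the true $r^{-2-2\epsilon}$) no power $\sigma^{1+s}$, $t\ge 0$, can dominate, because the genuine commutator decays strictly faster in $r$ than $r^{-2+\epsilon}$ on states concentrated at radius $R$; quantitatively one compares $\sigma^{1+s}|\omega|^t R^{-2+\epsilon}$ against the actual upper bound $C(\sigma^6 R^{-2-2\epsilon}+\sigma^5 R^{-2-\epsilon}\inp{p}_\sigma^2+\cdots)$ evaluated on $\psi_\sigma$, and sends $R\to\infty$ at fixed $\sigma$, or $\sigma\to\infty$ at fixed $R$, to kill the left side relative to the right in whichever regime is needed.

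The main obstacle I anticipate is the construction of the test state with enough precision: one needs $\psi_\sigma$ to simultaneously (i) be an approximate eigenfunction killing the leading symbol of $Q(a)$ so that the ``ellipticity'' terms in \eqref{eq:27b} are provably absent, (ii) have the $r$-localization interact correctly with the $r$-dependence of $\omega$ and its derivatives so the powers of $r$ come out as claimed, and (iii) keep all error terms from the commutations (the $E$ and $E'$ pieces, and the $\ad$-corrections to $a^2(a^*)^2$) genuinely subleading. Concretely I expect to use a WKB-type ansatz $\psi_\sigma=e^{-\sigma r+\i\phi_\sigma}\chi$, with the phase $\phi_\sigma$ solving an eikonal-type equation $|\nabla\phi_\sigma|^2=\text{const}$ making $a\psi_\sigma$ small, and then verify the two falsity claims by a direct (if somewhat tedious) estimate of $\|Q(a)\psi_\sigma\|$, $\|Q(a^*)\psi_\sigma\|$, and the cross term. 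The rotational symmetry of $Q(\xi)=(\xi^2)^2$ and the radial structure of $r_\epsilon$ should make an essentially one-dimensional reduction possible, which is what keeps the computation manageable; the bound \eqref{eq:27}, i.e. \eqref{eq:27bcc}, then appears as the sharp outcome, showing the method cannot be pushed to \eqref{eq:27b}.
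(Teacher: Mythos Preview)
Your overall strategy---build a family of test states localized at a large radius $R$, estimate $\inp{[Q(a),Q(a^*)]}_{\psi_\sigma}$ from above, and contradict the conjectured lower bounds by scaling $R$ against $\sigma$---is exactly what the paper does. The gap is in the test state itself. First, the factor $e^{-\sigma r}$ in your ansatz is a confusion: $a=p-\i\sigma\omega$ already encodes the conjugation, and you are simply testing a differential operator on a state, so no exponential weight is needed. Second, and more importantly, the goal ``$a\psi_\sigma\approx 0$'' is too strong and cannot be achieved with a localized state (it would force $\psi_\sigma\sim e^{-\sigma r}$ globally). What actually kills the dangerous $\sigma^7$ terms is the weaker condition $a^2\psi_\sigma\approx 0$, i.e.\ $(p^2-\sigma^2|\omega|^2)\psi_\sigma$ and $p_\omega\psi_\sigma$ both small; this is what makes the leading part $32\sigma(a^*)^2\sum_{i,j}a_i^*(\partial_i\omega_j)a_j\,a^2$ of the commutator collapse.

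The paper realizes this concretely via
\[
\psi_\sigma(x)=k^{-(d-1)/2}\,Y_l(\hat x)\,\phi\bigl((|x|-k)/m\bigr),
\]
a spherical harmonic times a radial bump. The degree $l$ is tuned so that $l(l+d-2)/k^2\approx\sigma^2|\omega(ke)|^2$, making the angular Laplacian match $\sigma^2|\omega|^2$ and hence $(p^2-\sigma^2\omega^2)\psi_\sigma$ small; the radial width is $m=\sqrt{k/\sigma}$, which balances the radial momentum error $p_\omega\psi_\sigma=O(m^{-1})$ against the mismatch $\bigl|l(l+d-2)/|x|^2-\sigma^2\omega^2\bigr|=O(\sigma^2 m/k)$. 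A direct computation then gives $\inp{[Q(a),Q(a^*)]}_{\psi_\sigma}\le C\sigma^6 k^{-2}$. Comparing with $\sigma^{1+s}k^{-2+\epsilon}$ (choose $k=\sigma^{(5+\epsilon+|s|)/\epsilon}$) refutes \eqref{eq:31a}, and comparing with $\sigma^{1+s}k^{-2}$ for $s>5$ (just take $\sigma$ large) refutes \eqref{eq:31b}. Your intuition about a one-dimensional reduction via rotational symmetry is exactly right; the spherical-harmonic ansatz is how one implements it, and your WKB phase $\phi_\sigma$ should be thought of as the tangential phase carried by $Y_l$ rather than a radial eikonal.
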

\begin{proof}
  We  introduce  a state of the form
\begin{align*}
  \psi_\sigma(x)=k^{-(d-1)/2}Y_l(\hat x)\phi((|x|-k)/m),
\end{align*} where  $Y_l$ is a spherical harmonic and the
indices $k,l,m>0$ are large. More precisely we  take
$k= \sigma^{(5+\epsilon+|s|)/\epsilon}$,  
$m=\sqrt{k/\sigma}$ and  $l$ to be    the integer part of
$\tilde l$ which is the unique positive solution to the equation
\begin{align*}
  \tfrac {\tilde l(\tilde l+d-2)}{k^2}=\sigma^2\omega(ke)^2,
\end{align*} where $e$ is an arbitrary unit vector in $\R^d$. 
 Fix $\phi\in C^\infty_\c(\R_+)$ normalized, 
$\|\phi\|_{L^2}=1$. Note that $\psi_\sigma$ defined this way is
approximately normalized. 

Corresponding to \eqref{eq:31a} and \eqref{eq:31b}
\begin{subequations}
  \begin{align}
    \label{eq:31aB}
    \inp{\sigma^{1+s} |\omega|^tr^{-2+\epsilon}}_{\psi_\sigma}\approx
    \sigma^{1+s}k^{-2+\epsilon},
  \end{align} 
  \begin{align}
    \label{eq:31bB}
    \inp{\sigma^{1+s} |\omega|^tr^{-2}}_{\psi_\sigma}\approx
    \sigma^{1+s}k^{-2}.
  \end{align}  
  \end{subequations}

 To calculate the expectation of the left hand side of
\eqref{eq:31a} (or \eqref{eq:31b}) we use
\eqref{commutator formula}. The leading term of the commutator is
\begin{align*}
  32\sigma (a^*)^2\Sigma_{i,j}a_i^*(\partial_i\omega_j)a_j a^2,
\end{align*} which using the notation  $p_\omega=1/2(\omega\cdot
p+p\cdot \omega)$ and the familiar formulas
\begin{align*}
  p^2 \parb{f(|x|)\otimes Y_l(\hat x)}&=\parbb{-f''(|x|)-\tfrac
    {d-1}{|x|}f'(|x|)+\tfrac {l(l+d-2)}{|x|^2}f(|x|)}\otimes Y_l(\hat
  x),\\
\i[p^2,p_\omega]&=2\Sigma_{i,j}p_i(\partial_i\omega_j)p_j-\tfrac 12 (\Delta^2r),
\end{align*} leads to the upper bound
\begin{align*}
   &\inp{[Q(a),Q(a^*)]}_{\psi_\sigma}\\& \leq C\sigma\parb
   {\|\inp{p}_\sigma r^{-1/2}(p^2-\sigma^2
  \omega^2)\psi_\sigma\|^2+\sigma^2\|\inp{p}_\sigma r^{-1/2}p_\omega
  \psi_\sigma\|^2}+C\|\sigma r^{-1}\inp{p}_\sigma^2\psi_\sigma\|^2\\
&\leq C\sigma^3k^{-1}\parb {\|\parb{\tfrac
   {l(l+d-2)}{|x|^2}-\sigma^2\omega^2}\psi_\sigma\|^2+\sigma^2m^{-2}} +C\sigma^6k^{-2}
\\&\leq C\sigma^3k^{-1}\parb{\sigma^4m^2/k^2+\sigma^2m^{-2}+\sigma^3k^{-1}}.
\end{align*}

In combination with \eqref{eq:31a}--\eqref{eq:31bB}  we thus obtain the
impossible bounds
\begin{align*}
  3C\sigma^6k^{-2}=C\sigma^3k^{-1}\parb{\sigma^4m^2/k^2+\sigma^2m^{-2}+\sigma^3k^{-1}}\geq 
  \begin{cases}
    \sigma^{1+s}k^{-2+\epsilon},\\
\sigma^{1+s}k^{-2}
  \end{cases}.
\end{align*} 
\end{proof}

\appendix

\section{The Weyl symbol of $Q(p+\i \nabla f(x))$}\label{sec:appA}

We give a combinatorial formula for the Weyl symbol of
\begin{align*}
  \Opw (b): = e^{f(x)}\Opw (a)e^{-f(x)},
\end{align*}
namely formally
\begin{align}
b(x,\xi) = a(x, \xi -\i\nabla_{y})\exp{(f(x-y/2) - f(x+y/2))}|_{y=0}.\end{align} 
In the special case that $a(x,\xi)$ is a polynomial, $Q(\xi)$, and $f \in C^{\infty}(\mathbb R^n)$ 
we have 
\begin{align}
  b(x,\xi) &=e^{-\i\nabla_{\xi}\cdot\nabla_y} e^{f(x-y/2) - f(x+y/2)}|_{y=0}Q(\xi)\nonumber\\
&= e^{f(x + \i \nabla_{\xi}/2) - f(x - \i \nabla_{\xi}/2)}Q(\xi)\nonumber\\
&= Q(\xi)+\sum_{k, n_1,n_3, \cdots, n_{2k+1};\, n_{2k+1}\geq 1}2^{n_1 + \cdots +
  n_{2k+1}}\\ 
&\frac{\parb{\frac{1}{1!}\frac{\i\nabla_{\xi} \cdot
      \nabla_x}{2}f(x)}^{n_1}}{n_1!}\frac{\parb {\frac{1}{3!}\parb{\frac{\i\nabla_{\xi} \cdot  \nabla_x}{2}}^3f(x)}^{n_3}}{n_3!}
 \cdots\frac{\parb {\frac{1}{(2k+1)!}\parb{\frac{\i\nabla_{\xi} \cdot
       \nabla_x}{2}}^{2k+1}f(x)}^{n_{2k+1}}}{n_{2k+1}!}Q(\xi).\nonumber
\end{align}

\end{document}